\documentclass[12pt]{amsart}
\usepackage{booktabs,graphicx, amsmath, amsfonts}
\usepackage{tabularx}
\usepackage{amsthm}
\usepackage{colortbl}
\usepackage[table]{xcolor}
\usepackage{rotating}
\usepackage{array}
\usepackage{geometry}
\usepackage{booktabs}
\definecolor{lightgray}{RGB}{200,200,200}
\definecolor{lightpink}{RGB}{255,182,193}
\definecolor{lightblue}{RGB}{180,220,255}
\definecolor{orange}{RGB}{255,165,0}
\renewcommand{\arraystretch}{}

\newtheorem{theorem}{Theorem}

\newtheorem{definition}{Definition}
\newtheorem{lemma}[theorem]{Lemma}

\title{New minor minimal non-apex graphs}
\author{Andrei Pavelescu, Elena Pavelescu, and Madeline Potter}

\address{\textit{andreipavelescu@southalabama.edu}, Department of Mathematics and Statistics, University of South Alabama, Mobile, AL  36688, USA}
\address{\textit{elenapavelescu@southalabama.edu}, Department of Mathematics and Statistics, University of South Alabama, Mobile, AL  36688, USA}
\address{\textit{mmp2123@jagmail.southalabama.edu}, University of South Alabama, Mobile, AL  36688, USA}

\begin{document}

\begin{abstract}
A graph is apex if it becomes planar after the deletion of one vertex. The family of apex graphs is closed under taking minors, so it is characterized by a finite set of forbidden minors. 
Determining the finite set of forbidden minors for apex graphs remains an open question.
In this paper, we list all forbidden minors for apex graphs with 12 or fewer vertices and all forbidden minors for apex graphs with 26 and fewer edges. We also present graphs outside of these ranges. 
We show that a graph with 13 vertices and minimal degree 6 is either apex or contains a $K_6$ minor, proving J\o rgensen's conjecture for order 13.
\end{abstract}

\keywords{apex graphs, minor minimal non-apex graphs, J\o rgensen conjecture}

\maketitle
\section{Introduction}

An \textit{apex} graph is a graph that becomes planar after the removal of one vertex. 
The Graph Minor Theorem of Robertson and Seymour \cite{RT} implies that if a class of graphs is defined by a minor-closed property, then the set of minor minimal graphs without the property is finite. These graphs are called forbidden minors for the property. Planarity, the property of being apex, linkless embeddability, knotless embeddability are all minor-closed properties. Therefore, each of these classes of graphs has a finite set of forbidden minors.  
The set of forbidden minors for planarity is known to  contain exactly two graphs, the complete graph on six vertices $K_6$ and the complete bipartite graph $K_{3,3}$ \cite{Ku, Wa}. 
The set of forbidden minors for linkless embeddability is known to contain precisely seven graphs: $K_6$ and the graphs obtained from $K_6$ through sequences of delta-wye and wye-delta transformations \cite{CG, RST, Sa1}. This collection is also called the Petersen family of graphs, since it includes the Petersen graph.
The complete sets of forbidden minors for apexity and knotless embeddability are not yet known.  

Many researchers have searched for forbidden minors for apexity. 
These graphs are also known as minor minimal non-apex (MMNA) graphs because they are not apex and every one of their minors is apex. 
In \cite{JK}, the authors determine the 133 MMNA graphs that have connectivity two. 
Lipton et al. \cite{L} showed that apex obstructions have connectivity at most five. 
Pierce lists 157 MMNA graphs \cite{P}.
In this article, we present a complete list of MMNA graphs  with 12 or fewer vertices or 26 or fewer edges.
We also present several MMNA graphs outside of these ranges.
These additional graphs belong  to the delta-wye families of already found MMNA graphs with 11 or 12 vertices.
Although many of these graphs have been known previously, as far as we know, this list is more comprehensive than any other currently published.
In total, we present 339 graphs.
Our findings are computer-assisted. We conducted exhaustive searches of graphs with at most 12 vertices and of graphs with at most 26 edges. We present our findings in Section 2 and describe the computer programs in Section 3. For graphs with more than 12 vertices, the number of graphs is overwhelmingly large for us to apply our search methods. It is possible that these searches may be carried out in the future.
In Section 4, we prove Theorem 1, which describes restrictions that can be imposed on graphs with 13 vertices, potentially making such searches more efficient. This theorem strengthens J\o rgensen's conjecture that every 6-connected graph of order 13 without a $K_6$ minor is apex. The graph $K_6$ is MMNA; thus, any graph containing $K_6$ as a minor cannot be MMNA.

\begin{theorem} Let $G$ be a simple graph of order $13$ and minimal degree 6. Then either $G$ contains a $K_6$-minor or it is apex and isomorphic to $K_1\ast Ic$.
\label{main13}
\end{theorem}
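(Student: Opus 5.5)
We will reduce the statement to known results on $K_6$-minor-free graphs with minimum degree $6$ and then handle the residual case by hand. Throughout, $G$ is a minimal counterexample in the sense that $G$ has order $13$, $\delta(G)\ge 6$, and no $K_6$-minor. The number of edges satisfies $|E(G)|\ge 39$. By Mader's theorem, a graph on $n\ge 6$ vertices with at least $4n-9=43$ edges has a $K_6$-minor, so $39\le |E(G)|\le 42$. Hence at least $13-2\cdot 42+6\cdot 13=7$ vertices have degree exactly $6$, and the degree sequence is very restricted: the excess $\sum(\deg v-6)$ is at most $6$.

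The first main step is to use connectivity. If $G$ has a separation $(A,B)$ with $|A\cap B|\le 5$, then each side has a vertex of degree $6$ whose neighbourhood lies in its side. Each side therefore has at least $7$ vertices, so $|A\cup B|\ge 14-|A\cap B|$ forces $|A\cap B|\ge 1$ and gives strong restrictions. In the small cases we contract one side onto the separator: contracting a connected component of $B\setminus A$ makes the separator vertices adjacent. We then show that the other side, together with the clique on the separator, still has minimum degree at least $6$ on fewer vertices. Known results for orders $\le 12$ (the Jørgensen-type results: graphs with $\delta\ge 6$ on at most $12$ vertices are either $K_6$-minor-containing or specific apex graphs) then yield a $K_6$-minor or a contradiction. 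This reduces us to the case where $G$ is $6$-connected.

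In the $6$-connected case we examine a degree-$6$ vertex $v$ and its neighbourhood $N(v)$. Since $\delta\ge 6$, $G-v$ has $12$ vertices. Let $H=G[N(v)]$. If $G-N[v]$ (which has $6$ vertices) is connected, contracting it to a single vertex and then using $v$ gives two vertices adjacent to all of $N(v)$. A $K_6$-minor then arises as soon as $H$ contains a $K_4$-minor, or, after suitable contractions of edges from $N(v)$ into $G-N[v]$, as soon as $H$ is not a graph of very low density. The minimum-degree condition gives $\deg_H(u)\ge 6-1-|N(u)\setminus N[v]|$, which bounds the edges of $H$ from below. Combining this with $|E(G)|\le 42$, we expect to force $H$ to be a cycle-like sparse graph, essentially $C_6$ or a graph with a very specific structure. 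Running this over all degree-$6$ vertices, each neighbourhood induces a cycle, and the graph is locally a triangulation.

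The main obstacle is the final identification. We need to show that when every degree-$6$ neighbourhood is sparse and no $K_6$-minor exists, $G-x$ is planar for some $x$ and in fact $G\cong K_1\ast Ic$. Here $Ic$ is the icosahedron, which is $5$-regular on $12$ vertices, so $K_1\ast Ic$ has one vertex of degree $12$ and twelve of degree $6$, with $42$ edges. Our plan is as follows.
\begin{itemize}
\item First, use the edge count together with the local cycle structure to show that there is a vertex $x$ of large degree.
\item Second, show that $G-x$ is a $5$-regular graph in which every vertex neighbourhood induces $C_5$, that is, a locally-$C_5$ graph on $12$ vertices.
\item Third, invoke the classical fact that the unique connected locally-$C_5$ graph is the icosahedron.
\end{itemize}
If the degree distribution instead allows several high-degree vertices, we expect to exhibit explicit $K_6$-minors case by case. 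This would likely require a finite, possibly computer-assisted, case check analogous to the searches in Section 3.
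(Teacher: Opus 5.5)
Your proposal has genuine gaps. The steps that carry the weight are stated only as expectations, and several concrete claims are false.

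\textbf{The $6$-connected step.} You assert that a $K_6$-minor appears as soon as $H=G[N(v)]$ has a $K_4$-minor, given that $v$ and the contracted $G-N[v]$ are both complete to $N(v)$. This fails for the very graph the theorem singles out. In $K_1\ast Ic$, take a vertex $v$ of $Ic$:
\begin{itemize}
\item $v$ has degree $6$, and $N(v)$ induces the wheel $W_5$ (the pentagonal link of $v$ plus the cone vertex), which has a $K_4$-minor;
\item $G-N[v]$ is connected and, once contracted, is adjacent to all of $N(v)$;
\item yet $K_1\ast Ic$ is apex and has no $K_6$-minor.
\end{itemize}
Since $v$ and the contracted vertex are non-adjacent, you need more. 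One sufficient condition is a $K_4$-minor of $H$ that leaves some vertex of $N(v)$ unused, so that vertex can be merged with $v$. This is how the paper handles its $K_{3,3}$ and prism neighbourhoods. The same example shows that your intended conclusion, that every degree-$6$ neighbourhood induces a cycle, is false. So the ``locally a triangulation'' route cannot reach $K_1\ast Ic$.

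\textbf{The connectivity reduction.} This step is also unsupported.
\begin{itemize}
\item Contracting a component of $B\setminus A$ yields a single vertex adjacent to the separator, not a clique on it. A clique would require a rooted clique minor in $G[B]$.
\item A separator vertex with only one neighbour in $A\setminus B$ gets degree at most $|A\cap B|\le 5$ in the reduced graph. So the order-$\le 12$ results cannot be invoked; those results in fact say every graph of order at most $12$ with $\delta\ge 6$ has a $K_6$-minor, with no exceptions.
\item $6$-connectivity does not exclude $G-N[v]$ being disconnected, and you never treat that case.
\end{itemize}

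\textbf{What is missing.} First, the identification needs no local analysis. If $G$ is apex, then $G-v$ is planar on $12$ vertices with minimum degree at least $5$. Hence it has exactly $30=3\cdot 12-6$ edges, is $5$-regular, and so is $Ic$, with $v$ universal.

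The real content is that a non-apex $G$ with $39\le |E(G)|\le 42$ has a $K_6$-minor. The paper proves this by size:
\begin{itemize}
\item For $42=4n-10$ edges, it uses J{\o}rgensen's theorem that an extremal $K_6$-minor-free graph is an $MP_1$-cockade. Non-apex then forces a proper clique sum over $K_4$, and degree counting plus Mader's bound on one side gives $K_6$.
\item For $41$ edges, it contracts an edge whose ends share at most two neighbours, landing in order $12$ with at least $38$ edges. There Mader's bound or the paper's order-$12$ cockade lemma applies, with a degree-sequence analysis for the remaining configurations.
\item For $39$ and $40$ edges, it runs an exhaustive computer check over the $2$-connected candidates.
\end{itemize}
Your closing appeal to ``a finite, possibly computer-assisted, case check'' specifies no finite family to check. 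As it stands, the proposal is not a proof, computer-assisted or otherwise.
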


Apex graphs provide a bridge between planar graphs and more complex graph families. Tools from planar graph such as separators and structural decompositions still apply. Their almost planar nature also gives them algorithmic advantages: many otherwise hard problems become tractable on apex graphs. As a result, they are both theoretically significant and useful for modeling real world networks that are planar except for one node.

\section{Graphs with 12 or fewer vertices or 26 or fewer edges}
%We begin by considering the vertex degrees and the number of edges of an MMNA graph.
We establish background and notation.
 A \textit{minor} of a graph is obtained through a sequence of vertex deletions, edge deletions, and edge contractions. An edge contraction identifies the endpoints of the edge and deletes any loops and multiple edges thus created. We denote the minimmum degree among the vertices of a graph $G$ by $\delta(G)$, and the maximum degree by $\Delta(G).$
%For a vertex $v \in V(G)$, the set $N_G(v)$ is the set of all vertices of $G$ that are adjacent to $v$, and $N_G[v]= N_G(v)\cup \{v\}$. 
A graph $G'$ is obtained from $G$ by a \textit{delta--wye} operation if a triangle $abc$ in $G$ is removed and a new vertex $v$ is added together with edges $va$, $vb$, and $vc$. 
This operation preserves the size of the graph and increases its order by one.

\begin{lemma}
A minor minimal non-apex graph has minimum vertex degree at least 3.
\label{lem:mindeg}
\end{lemma}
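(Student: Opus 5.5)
We argue by contradiction. Let $G$ be minor minimal non-apex and suppose some vertex $v$ has $\deg(v)\le 2$. We will find a proper minor $H$ of $G$ such that $G$ is apex whenever $H$ is. Since $H$ is apex by minimality, this contradicts the assumption that $G$ is non-apex.

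First we dispose of $\deg(v)\le 1$. Take $H=G-v$, which is apex, and let $a$ be a vertex with $H-a$ planar. Then $G-a$ is $H-a$ with $v$ added, possibly together with one pendant edge, or with no edges at all if $v=a$ or $v$ was joined only to $a$. Adding an isolated or pendant vertex keeps a graph planar, so $G-a$ is planar and $G$ is apex, a contradiction. Note also that $G$ has no isolated vertices and is connected in the relevant sense. Indeed, if $G$ were disconnected with two non-planar components, each would be non-apex-relevant; it is simpler just to handle the degree condition locally as above.

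Now let $\deg(v)=2$ with neighbours $u,w$. Contract the edge $vu$ to obtain $H=G/vu$. This is a proper minor, so it is apex; let $a$ be a vertex with $H-a$ planar. There are two cases.

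\emph{Case 1: $uw\notin E(G)$.} Then $H$ is $G$ with the path $u v w$ suppressed to the edge $uw$, so $G$ is a subdivision of $H$.
If $a\ne u,w$, then $G-a$ is a subdivision of $H-a$, hence planar.
If $a=u$ (the case $a=w$ is symmetric), then $G-u$ is $H-u$ with a pendant vertex $v$ attached to $w$, hence planar.

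\emph{Case 2: $uw\in E(G)$.} Then $H=G-v$. If $a\neq u,w$, take a planar embedding of $H-a$; it contains the edge $uw$, and we can place $v$ next to that edge, inside a face incident to it, and join $v$ to both $u$ and $w$. If $a\in\{u,w\}$, then $v$ becomes pendant in $G-a$, so $G-a$ is planar.

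In all cases $G$ is apex, a contradiction. The only delicate point is Case 2 with $a\notin\{u,w\}$, where we need the edge $uw$ to be present in $H-a$ so that $v$ can be drawn alongside it. Since neither $u$ nor $w$ was removed, the edge is indeed present.
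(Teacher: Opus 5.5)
Your proof is correct and follows the paper's approach: delete $v$ when $\deg v\le 1$, and contract an edge at $v$ when $\deg v=2$. Beyond that, you supply the case analysis the paper leaves implicit, showing that an apex minor forces $G$ to be apex. The aside claiming $G$ is ``connected in the relevant sense'' is unused and should be dropped, since MMNA graphs can be disconnected (e.g.\ $K_{3,3}\sqcup K_{3,3}$).
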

\begin{proof}
If a non-apex graph $G$ has a vertex $v$ of degree 0 or 1, then $G\setminus \{v\}$ is also non-apex.  
If a non-apex graph $G$ has a vertex $v$ of degree 2, then the graph $G/e$ obtained by contracting one of the two edges incident to $v$ is also non-apex. In either case, $G$ is not minor minimal non-apex.
\end{proof}
Lemma \ref{lem:mindeg} implies that $\lceil 3n/2 \rceil$ is a lower bound on the number of edges of $G$. On the other hand, by Mader \cite{Ma}, a graph of order $n\ge 6$ and at least $4n-9$ edges contains $K_6$ as minor. The graph $K_6$ is itself MMNA, so any other MMNA graph with $n$ vertices has at most $4n-10$ edges.

A graph is \textit{linklessly embeddable} if it can be embedded in space such that no two of its cycles are non-trivially linked. If not linklessly embeddable, a graph is said to be \textit{intrinsically linked}. 
The minor minimal intrinsically linked graphs, the seven graphs in the Petersen family, are MMNA. They are the graphs with 15 edges listed in Tables \ref{tab:mmna1} and \ref{tab:mmna2}.  
This means that all other MMNA graphs are necessarily linklessly embeddable.

Our search relies on previously known families of \textit{maximal linklessly embeddable graphs (maxnil)}. 
A graph is maxnil if it is linklessly emebddable and the addition of any one edge creates a graphs that is intrinsically linked. 
The MMNA graphs not in the Petersen family must appear as subgraphs of maxnil graphs of the same order. 
We used the complete list of maxnil graphs of 12 or fewer vertices to find the MMNA graphs of 12 or fewer vertices.

\begin{theorem}[\cite{NOPP, Pa}] There are a total of 846 maximal linklessly embeddable graphs with 11 or fewer vertices, of which 305 graphs are apex and 521 graphs are non-apex.
\end{theorem}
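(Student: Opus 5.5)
This statement is a census result imported from \cite{NOPP, Pa}, so the plan is to reproduce the census computationally rather than derive it from structure. The proof has two independent parts. First, obtain the complete list of maxnil graphs on at most $11$ vertices. Second, classify each graph on that list as apex or non-apex. The figure $846$ comes from the first part and the split into apex and non-apex from the second.

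\textbf{Enumeration of maxnil graphs.} I would rely on the enumeration in \cite{NOPP, Pa} rather than search all graphs. One could cross-check it with a direct but pruned search. For each order $n\le 11$, generate the graphs with at most $4n-10$ edges (by Mader \cite{Ma}), since a maxnil graph other than $K_6$ cannot contain a $K_6$ minor. Also keep only graphs with no minor in the Petersen family, so that each one is linklessly embeddable. The maximality check then becomes a finite minor test: for every non-edge $uv$, $G+uv$ must contain a Petersen-family minor. Both conditions reduce to minor testing against seven fixed graphs. The completeness of the list rests on the Robertson--Seymour--Thomas characterization \cite{RST} of linkless embeddability.

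\textbf{Apex test.} This is elementary: $G$ is apex exactly when $G\setminus\{v\}$ is planar for some vertex $v$. So I would run a linear-time planarity test on the $n$ vertex-deleted subgraphs of each graph on the list. Two easy observations serve as sanity checks.
\begin{itemize}
\item Every planar graph is apex.
\item A graph with a $K_6$ or $K_{3,3}$-type obstruction surviving every single-vertex deletion is non-apex.
\end{itemize}
Tallying the outcomes by order gives the apex and non-apex counts.

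\textbf{Expected obstacle.} The hard step is the bookkeeping, not the mathematics. As stated, $305+521=826\neq 846$, so twenty graphs are unaccounted for. Before accepting the statement I would recheck, order by order, what the total counts: possibly disconnected maxnil graphs, or the small orders $n\le 5$ (where $K_n$ is trivially maxnil and planar). I would then reconcile the total against the per-order tables in \cite{NOPP, Pa}.
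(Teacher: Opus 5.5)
Your plan matches what the paper does. The paper gives no proof of this statement; it imports it from \cite{NOPP, Pa}. Your apex/non-apex split is exactly the vertex-deletion planarity test that \textit{isapex.py} performs (Section 3). You are also right that the statement cannot be verified as printed, since $305+521=826\neq 846$.

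However, neither reconciliation you suggest can close a gap of twenty. Disconnected graphs are never maxnil: if $G=G_1\sqcup G_2$ is linklessly embeddable, embed the components linklessly in disjoint balls and join them by an edge $e$. A bridge lies on no cycle, and cycles in different balls are split, so $G+e$ is still linklessly embedded. As for the small orders, $K_1,\dots,K_5$ are at most five graphs, all apex.

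You can check the apex count exactly without a computer, and this locates the misprint. For $n\ge 4$, the apex maxnil graphs of order $n$ are precisely the cones $K_1\ast T$ over maximal planar graphs $T$ of order $n-1$.
\begin{itemize}
\item If $G$ is maxnil and $G\setminus\{v\}$ is planar, extend $G\setminus\{v\}$ to a triangulation $T$ on the same vertex set. Then $G\subseteq K_1\ast T$, which is apex and hence linklessly embeddable, so $G=K_1\ast T$ by maximality.
\item Conversely, every non-edge of $K_1\ast T$ lies in $T$. Adding it makes $T+e$ non-planar, so $K_1\ast(T+e)$ has a $K_1\ast K_5=K_6$ or $K_1\ast K_{3,3}=K_{3,3,1}$ minor.
\item Two dominating vertices can be swapped by an automorphism, so $K_1\ast T\cong K_1\ast T'$ only if $T\cong T'$.
\end{itemize}
The numbers of triangulations of orders $5,\dots,10$ are $1,2,5,14,50,233$, which sum to $305$. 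For order $12$ the same argument gives $1{,}249$, the number of triangulations of order $11$, matching the paper's next theorem.

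So the apex figure is correct for orders $6$ through $11$, and the error lies in $846$ or in $521$. Only the per-order totals in \cite{NOPP, Pa}, or an independent enumeration, can decide which.

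One small slip: $K_6$ is not maxnil at all. The bound $4n-10$ therefore applies to every maxnil graph of order $n\ge 6$, with no exception.
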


\begin{theorem}[\cite{LPP, Pa}] There are a total of 6,503 maximal linklessly embeddable graphs of order 12, of which 1,249 graphs are apex and 5,254 graphs are non-apex.
\end{theorem}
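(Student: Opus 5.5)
This count is a computer-assisted enumeration, and I would reprove it in two computational phases: first produce the complete list of maxnil graphs of order $12$, then sort them by apexity. The theoretical input is the Robertson--Seymour--Thomas theorem \cite{RST}: a graph is linklessly embeddable if and only if it has no minor in the Petersen family. So \emph{linkless} is a finite minor test against seven fixed graphs. \emph{Maxnil} means linkless, with every non-edge $uv$ such that $G+uv$ contains a Petersen-family minor. Apexity is tested by deleting each of the $12$ vertices in turn and running a linear-time planarity test, which is the cheap part.

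For the enumeration I would not run over all graphs on $12$ vertices; there are about $1.6\times 10^{11}$ of them, which is too many to test individually. Instead I would restrict the search space using structural facts about maxnil graphs.

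\begin{itemize}
\item By Mader's bound quoted above (and since $K_6$ is in the Petersen family), a maxnil graph of order $12$ has at most $4\cdot 12-10=38$ edges.
\item Maximality forces a lower bound on the size, since an edge-sparse linkless graph can always accept an edge. I would use the known bound of roughly $\tfrac{25}{12}n$ edges. More usefully, maximality forces $\delta\ge 3$: a vertex of degree at most $2$ can always be joined to a further neighbour without creating a Petersen-family minor.
\item Maximality forces the graph to be connected, and it is $2$-connected unless it is a clique-sum in a controlled way.
\end{itemize}

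The generation step then proceeds by augmentation from the order-$11$ list of \cite{NOPP, Pa}. Let $G$ be maxnil of order $12$ and let $v$ be a vertex of minimum degree. Then $G-v$ is linkless, hence a spanning subgraph of some maxnil graph $M$ of order $11$. So $G$ is obtained from some $M$ in that list by three steps: delete a set $S$ of edges of $M$, add a vertex $v$ with neighbourhood $N$, and keep only those results that are linkless and maximal. To keep this finite and small I would bound $|S|$ and $|N|$ using the edge bounds above and $\delta(G)\le 6$; the latter holds because $2|E|\le 76$ forces a vertex of degree at most $6$. I would also prune any candidate that already contains a Petersen-family minor before extending it further. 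All candidates are canonically labelled (e.g.\ with nauty) and isomorphic copies merged. As an independent cross-check, I would also generate directly with \texttt{geng} restricted to $\delta\ge 3$ and the admissible edge range, filtering by the minor test, and compare the two lists.

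Once the list of $6{,}503$ graphs is in hand, each graph is classified: it is apex if some $G-v$ is planar, and non-apex otherwise. Tallying these gives the split $1{,}249$ / $5{,}254$.

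I expect the main obstacle to be the generation step. The difficulty is not only the raw size of the search but proving completeness: that every maxnil graph of order $12$ really arises from some $M$ by a bounded amount of edge deletion. If the bound on $|S|$ cannot be justified cleanly, I would fall back on the brute-force \texttt{geng} search with the degree and edge constraints. I would make that search feasible by splitting on the minimum degree $3\le\delta\le 6$ and distributing the runs, and then treat agreement of the two independent lists as the correctness check.
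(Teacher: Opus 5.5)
\textbf{There is a genuine gap: the search you describe rests on a false pruning claim and an unresolved bound, so it would not establish completeness.} The paper gives no proof of this count; it is quoted from \cite{LPP, Pa}. Your plan therefore has to stand on its own.

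\textbf{Maximality does not force $\delta\ge 3$.} Let $M$ be maxnil and let $ab\in E(M)$ lie in no triangle. Add a vertex $v$ adjacent only to $a$ and $b$.
\begin{itemize}
\item The result is linkless, since it is a clique sum over $K_2$ with $K_3$.
\item For any $x\neq a,b$, the vertex $x$ misses $a$ or $b$, say $a$. Contracting $va$ in $M+v+vx$ gives $M+ax$, which is intrinsically linked.
\item Non-edges inside $M$ are covered by the maximality of $M$.
\end{itemize}
So $M+v$ is maxnil with a vertex of degree $2$. This is exactly the case characterized in \cite{NPP}, and the paper reports $24$ such maxnil graphs of order $13$. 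Your justification (``a vertex of degree at most $2$ can always be joined to a further neighbour'') therefore fails. Whether the restriction happens to be harmless at order $12$ can only be settled by checking the order-$11$ list for edges lying in no triangle. Your \texttt{geng} cross-check uses the same $\delta\ge 3$ restriction, so agreement of the two lists cannot detect such omissions.

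\textbf{The edge range is also unjustified.} To my knowledge, the $\tfrac{25}{12}n$ figure is the edge density of a constructed family of \emph{sparse} maxnil graphs. That makes it an upper bound on the minimum size, not a lower bound, so it cannot be used to cut off the edge range.

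\textbf{The bound on $|S|$ is still missing.} You flag this yourself. Without a justified bound, the augmentation from order $11$ is either incomplete or a search over arbitrary edge subsets of graphs with up to $34$ edges. A workable organization, which the paper sketches for order $13$ in Section~4, is to split by minimum degree:
\begin{itemize}
\item By \cite{NPP}, maxnil graphs of order $12$ with $\delta=2$ (resp.\ $\delta=3$) are clique sums over $K_2$ with $K_3$ (resp.\ over $K_3$ with $K_4$) of maxnil graphs of order $11$. They can be generated directly from the order-$11$ list with no edge deletions, then tested for maximality.
\item By \cite{OP}, every graph of order $12$ with $\delta\ge 6$ has a $K_6$ minor, so a linkless graph of order $12$ has $\delta\le 5$. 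This is sharper than your counting bound $\delta\le 6$.
\item Only $\delta\in\{4,5\}$ then requires adding a vertex to edge-subgraphs of order-$11$ maxnil graphs.
\end{itemize}

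\textbf{The apex half needs no search.} If $G$ is apex maxnil with $G-v$ planar, extend $G-v$ to a triangulation $T$ on the same $11$ vertices. Then $K_1\ast T\supseteq G$ is apex, hence linkless, so maximality forces $G=K_1\ast T$. Conversely, every such cone is apex with $38=4\cdot 12-10$ edges, so adding any edge creates a $K_6$ minor by Mader's bound, and the cone is maxnil. Deleting any vertex of degree $11$ recovers $T$ up to isomorphism, so distinct $T$ give distinct cones. Hence the apex count equals the number $1{,}249$ of triangulations on $11$ vertices. This gives a check on part of the output that does not depend on your enumeration.
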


\renewcommand{\arraystretch}{1.6}
\begin{table}[htbp]
\centering
\scriptsize
\caption{Counts of MMNA graphs presented by Pierce in \cite{P}.}\vspace*{-0.1in}
\label{tab:mmna1}

\begin{tabularx}{\linewidth}{|c|*{17}{>{\centering\arraybackslash}X|}}
\hline
$n \,/\, e$ 
 & 15 & 16 & 17 & 18 & 19 & 20 & 21 & 22 & 23 & 24 & 25 & 26 & 27 & 28 & 29 & 30 & 31 \\
\hline
6  &  \cellcolor{lightblue}1 &  \cellcolor{lightgray}& \cellcolor{lightgray} & \cellcolor{lightgray} & \cellcolor{lightgray} & \cellcolor{lightgray} & \cellcolor{lightgray} & \cellcolor{lightgray} & \cellcolor{lightgray} & \cellcolor{lightgray} & \cellcolor{lightgray} & \cellcolor{lightgray} & \cellcolor{lightgray} & \cellcolor{lightgray} & \cellcolor{lightgray} & \cellcolor{lightgray} & \cellcolor{lightgray} \\
\hline
7  &  \cellcolor{lightblue}2 &  \cellcolor{lightgray}& \cellcolor{lightgray} & \cellcolor{lightgray} & \cellcolor{lightgray} & \cellcolor{lightgray} & \cellcolor{lightgray} & \cellcolor{lightgray} & \cellcolor{lightgray} & \cellcolor{lightgray} & \cellcolor{lightgray} & \cellcolor{lightgray} & \cellcolor{lightgray} & \cellcolor{lightgray} & \cellcolor{lightgray} & \cellcolor{lightgray} & \cellcolor{lightgray} \\
\hline
8  &   \cellcolor{lightblue}2 &  \cellcolor{lightgray}& \cellcolor{lightgray} & \cellcolor{lightgray} & \cellcolor{lightgray} & \cellcolor{lightgray}  & \cellcolor{lightblue}1 & \cellcolor{lightgray} & \cellcolor{lightgray} & \cellcolor{lightgray}  & \cellcolor{lightgray}& \cellcolor{lightgray} & \cellcolor{lightgray} & \cellcolor{lightgray}  & \cellcolor{lightgray} & \cellcolor{lightgray} & \cellcolor{lightgray} \\
\hline
9  &  \cellcolor{lightblue}1 & \cellcolor{lightgray} & \cellcolor{lightgray}& \cellcolor{lightblue}1 & \cellcolor{lightgray} & \cellcolor{lightblue}2 & \cellcolor{lightblue}1 &  \cellcolor{lightgray}  & \cellcolor{lightgray}&  \cellcolor{lightgray}  & \cellcolor{lightgray}&  \cellcolor{lightgray}  & \cellcolor{lightgray} &  \cellcolor{lightgray}  & \cellcolor{lightgray} & \cellcolor{lightgray} & \cellcolor{lightgray} \\
\hline
10 &  \cellcolor{lightblue}1 &  \cellcolor{lightgray}  & \cellcolor{lightgray}& \cellcolor{lightblue}2 & \cellcolor{lightblue}3 & \cellcolor{lightblue}7 & \cellcolor{lightblue}7 & \cellcolor{lightblue}4 & \cellcolor{lightblue}5 & \cellcolor{lightblue}2 &  \cellcolor{lightgray}  & \cellcolor{lightgray} &  \cellcolor{lightgray}  & \cellcolor{lightgray} &  \cellcolor{lightgray}  & \cellcolor{lightgray} &  \cellcolor{lightgray}\\
\hline
11 & \cellcolor{lightgray} &  \cellcolor{lightgray} & \cellcolor{lightgray} &  \cellcolor{lightgray} & \cellcolor{lightblue}2 & \cellcolor{lightblue}12 & \cellcolor{lightblue}14 & \cellcolor{orange}2 & \cellcolor{orange}4 & \cellcolor{orange}1 & \cellcolor{orange}1 &  & \cellcolor{orange}1 &  &  &  &\\
\hline
12 & \cellcolor{lightgray} & \cellcolor{lightgray} & \cellcolor{lightgray} & \cellcolor{lightblue}1 & \cellcolor{lightgray} & \cellcolor{lightblue}1 & \cellcolor{lightblue}12 & \cellcolor{orange}1 & \cellcolor{orange}6 & & \cellcolor{orange}2 & \cellcolor{orange}2 & \cellcolor{orange}2 & \cellcolor{orange}1 &  & \cellcolor{orange}1 &\\
\specialrule{1pt}{0pt}{0pt}
13 & \cellcolor{lightgray} & \cellcolor{lightgray} &  \cellcolor{lightgray} & \cellcolor{lightgray} &  \cellcolor{lightgray} & \cellcolor{lightgray} & \cellcolor{lightgray} & \cellcolor{orange}1 & \cellcolor{orange}3 &  & \cellcolor{orange}3 & \cellcolor{orange}3 & \cellcolor{orange}2 & \cellcolor{orange}1 & \cellcolor{orange}2 & \cellcolor{orange}1  & \\
\hline
14 & \cellcolor{lightgray} & \cellcolor{lightgray} &  \cellcolor{lightgray} & \cellcolor{lightgray} &  \cellcolor{lightgray} & \cellcolor{lightgray} & \cellcolor{lightgray} & & & \cellcolor{orange}1 & \cellcolor{orange}2 & \cellcolor{orange}2 & \cellcolor{orange}1 & \cellcolor{orange}7 & \cellcolor{orange}1 &  &\\
\hline
15 & \cellcolor{lightgray} & \cellcolor{lightgray} &  \cellcolor{lightgray} & \cellcolor{lightgray} &  \cellcolor{lightgray} & \cellcolor{lightgray} & \cellcolor{lightgray} & \cellcolor{lightgray} & &  &  &  & \cellcolor{orange}4 & \cellcolor{orange}8 & & & \\
\hline
16 & \cellcolor{lightgray} & \cellcolor{lightgray} &  \cellcolor{lightgray} & \cellcolor{lightgray} &  \cellcolor{lightgray} & \cellcolor{lightgray} & \cellcolor{lightgray} & \cellcolor{lightgray} & \cellcolor{lightgray}&  &  &  & \cellcolor{orange}1 & \cellcolor{orange}6 & & & \\
\hline
17 & \cellcolor{lightgray} & \cellcolor{lightgray} &  \cellcolor{lightgray} & \cellcolor{lightgray} &  \cellcolor{lightgray} & \cellcolor{lightgray} & \cellcolor{lightgray} & \cellcolor{lightgray} & \cellcolor{lightgray}&  \cellcolor{lightgray}&  \cellcolor{lightgray}&  & &  & & & \\
\hline
\end{tabularx}
\end{table}

\renewcommand{\arraystretch}{1.6}
\begin{table}[htbp]
\centering
\scriptsize
\caption{Counts of MMNA graphs with 18 or fewer vertices.}\vspace*{-0.1in}
\label{tab:mmna2}
\begin{tabularx}{\linewidth}{|c|*{20}{>{\centering\arraybackslash}X|}}
\hline
$n \,/\, e$ 
 & 15 & 16 & 17 & 18 & 19 & 20 & 21 & 22 & 23 & 24 & 25 & 26 & 27 & 28 & 29 & 30 & 31 & 32 & 33 & 34\\
\hline
6  &  \cellcolor{lightblue}1 &  \cellcolor{lightpink}& \cellcolor{lightpink}& \cellcolor{lightpink} & \cellcolor{lightpink} & \cellcolor{lightpink} & \cellcolor{lightpink} &\cellcolor{lightpink} & \cellcolor{lightpink}& \cellcolor{lightpink} & \cellcolor{lightpink} & \cellcolor{lightpink} & \cellcolor{lightpink} & \cellcolor{lightpink} & \cellcolor{lightpink} & \cellcolor{lightpink} & \cellcolor{lightpink} & \cellcolor{lightpink} & \cellcolor{lightpink}& \cellcolor{lightpink}
 \\
\hline
7  &  \cellcolor{lightblue}2 &  \cellcolor{lightgray}& \cellcolor{lightgray} & \cellcolor{lightgray} & \cellcolor{lightpink} & \cellcolor{lightpink} & \cellcolor{lightpink} & \cellcolor{lightpink} & \cellcolor{lightpink}& \cellcolor{lightpink} & \cellcolor{lightpink} & \cellcolor{lightpink} & 
\cellcolor{lightpink} & \cellcolor{lightpink} & \cellcolor{lightpink} & \cellcolor{lightpink} & \cellcolor{lightpink} & \cellcolor{lightpink} & \cellcolor{lightpink} & \cellcolor{lightpink}  \\
\hline
8  &   \cellcolor{lightblue}2 &  \cellcolor{lightgray}& \cellcolor{lightgray} & \cellcolor{lightgray} & \cellcolor{lightgray} & \cellcolor{lightgray}  & \cellcolor{lightblue}1 & \cellcolor{lightgray} & \cellcolor{lightpink} & \cellcolor{lightpink} & \cellcolor{lightpink}& \cellcolor{lightpink} & \cellcolor{lightpink} & \cellcolor{lightpink} & \cellcolor{lightpink} & \cellcolor{lightpink} &
\cellcolor{lightpink} & \cellcolor{lightpink}  & \cellcolor{lightpink} &\cellcolor{lightpink}\\
\hline
9  &  \cellcolor{lightblue}1 & \cellcolor{lightgray} & \cellcolor{lightgray}& \cellcolor{lightblue}1 & \cellcolor{lightgray} & \cellcolor{lightblue}2 & \cellcolor{lightblue}1 &  \cellcolor{lightgray}  & \cellcolor{lightgray}&  \cellcolor{lightgray}  & \cellcolor{lightgray}&  \cellcolor{lightgray}  & 
\cellcolor{lightpink} & \cellcolor{lightpink} &
\cellcolor{lightpink}  & \cellcolor{lightpink}   & \cellcolor{lightpink}   &  \cellcolor{lightpink}    & \cellcolor{lightpink}   & \cellcolor{lightpink}   \\
\hline
10 &  \cellcolor{lightblue}1 &  \cellcolor{lightgray}  & \cellcolor{lightgray}& \cellcolor{lightblue}2 & \cellcolor{lightblue}3 & \cellcolor{lightblue}7 & \cellcolor{lightblue}7 & \cellcolor{lightblue}4 & \cellcolor{lightblue}5 & \cellcolor{lightblue}2 &  \cellcolor{lightgray} & \cellcolor{lightgray} &
\cellcolor{lightgray} & \cellcolor{lightgray}  & \cellcolor{lightgray}  & \cellcolor{lightgray} & \cellcolor{lightpink}   & \cellcolor{lightpink}  & \cellcolor{lightpink}   & \cellcolor{lightpink}   \\
\hline
11 &\cellcolor{lightpink}  & \cellcolor{lightpink}  & \cellcolor{lightgray} &  \cellcolor{lightgray} & \cellcolor{lightblue}2 & \cellcolor{lightblue}12 & \cellcolor{lightblue}14 & \cellcolor{lightblue}15 & \cellcolor{lightblue}12 & \cellcolor{lightblue}2 & \cellcolor{lightblue}2 &  \cellcolor{lightblue}1& \cellcolor{lightblue}1 &  \cellcolor{lightgray} &  
\cellcolor{lightgray} & \cellcolor{lightgray} &
\cellcolor{lightgray} & \cellcolor{lightgray}  & \cellcolor{lightgray}& \cellcolor{lightgray} \\
\hline
12 & \cellcolor{lightpink}  & \cellcolor{lightpink}  & \cellcolor{lightpink} & \cellcolor{lightblue}1 & \cellcolor{lightgray} & \cellcolor{lightblue}1 & \cellcolor{lightblue}12 & \cellcolor{lightblue}15 & \cellcolor{lightblue}11 & \cellcolor{lightblue}9 & \cellcolor{lightblue}10 & \cellcolor{lightblue}7 & \cellcolor{lightblue}4 & \cellcolor{lightblue}1 & 
\cellcolor{lightblue}2 & \cellcolor{lightblue}1 &
\cellcolor{lightgray} & \cellcolor{lightgray} &
\cellcolor{lightgray} & \cellcolor{lightgray} \\
\specialrule{1pt}{0pt}{0pt}
13 & \cellcolor{lightpink}  & \cellcolor{lightpink}  & \cellcolor{lightpink}  & \cellcolor{lightpink} & \cellcolor{lightpink} & \cellcolor{lightgray} & \cellcolor{lightgray} & \cellcolor{lightblue}4 & \cellcolor{lightblue}13 & \cellcolor{lightblue}14 & \cellcolor{lightblue}15 & \cellcolor{lightblue}16 & \cellcolor{orange}6 &\cellcolor{orange}2 & \cellcolor{orange}2 &  \cellcolor{orange}1& &  & &   \\
\hline
14 & \cellcolor{lightpink} & \cellcolor{lightpink}  & \cellcolor{lightpink}  & \cellcolor{lightpink}  & \cellcolor{lightpink}  & \cellcolor{lightpink}  & \cellcolor{lightgray} & \cellcolor{lightgray} & \cellcolor{lightblue}5 & \cellcolor{lightblue}10 & \cellcolor{lightblue}14 & \cellcolor{lightblue}16 & \cellcolor{orange}5 &\cellcolor{orange}4 &  &  & &  & &   \\
\hline
15 & \cellcolor{lightpink} & \cellcolor{lightpink}  & \cellcolor{lightpink}  & \cellcolor{lightpink}  & \cellcolor{lightpink}  & \cellcolor{lightpink}  &  \cellcolor{lightpink} & \cellcolor{lightpink}& \cellcolor{lightgray} & \cellcolor{lightblue}1 & \cellcolor{lightblue}1 & \cellcolor{lightblue}6 & \cellcolor{orange}10 & \cellcolor{orange}7 & & & & & &   \\
\hline
16 &\cellcolor{lightpink} & \cellcolor{lightpink}  & \cellcolor{lightpink}  & \cellcolor{lightpink}  & \cellcolor{lightpink}  & \cellcolor{lightpink}  & \cellcolor{lightpink} & \cellcolor{lightpink}& \cellcolor{lightpink} & \cellcolor{lightblue}2 &  \cellcolor{lightgray} & \cellcolor{lightblue}1 &\cellcolor{orange}1  & \cellcolor{orange}6 & &\ & &  & &    \\
\hline
17 & \cellcolor{lightpink} & \cellcolor{lightpink}  & \cellcolor{lightpink}  & \cellcolor{lightpink}  & \cellcolor{lightpink}  & \cellcolor{lightpink}  & \cellcolor{lightpink} & \cellcolor{lightpink} & \cellcolor{lightpink} & \cellcolor{lightpink} &
\cellcolor{lightpink} & \cellcolor{lightgray} &
\cellcolor{lightgray} &  &  &  & & & &   \\
\hline
18 & \cellcolor{lightpink} & \cellcolor{lightpink}  & \cellcolor{lightpink}  & \cellcolor{lightpink}  & \cellcolor{lightpink}  & \cellcolor{lightpink}  & \cellcolor{lightpink} & \cellcolor{lightpink} & \cellcolor{lightpink} & \cellcolor{lightpink} &
\cellcolor{lightpink}  & \cellcolor{lightpink} &
\cellcolor{lightgray} & \cellcolor{lightgray} &  &  & & & &   \\
\hline
\end{tabularx}
\end{table}

\begin{theorem}
 There are 177 minor minimal non-apex graphs with 12 or fewer vertices.
 \end{theorem}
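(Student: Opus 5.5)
The count will come from a computer search organized around the maxnil graphs. The structural reduction comes first. By Lemma \ref{lem:mindeg}, every MMNA graph $G$ has $\delta(G)\ge 3$. Every MMNA graph outside the Petersen family is linklessly embeddable, and any linklessly embeddable graph of order $n$ extends, by adding edges, to a maxnil graph of order $n$. So every MMNA graph of order $n\le 12$ is either one of the seven Petersen family graphs or a spanning subgraph of one of the non-apex maxnil graphs of order $n$. There are 521 such graphs with at most 11 vertices and 5,254 with 12 vertices, by the two theorems quoted above. Apex maxnil graphs contribute nothing, since subgraphs of apex graphs are apex. We also use Mader's bound: any MMNA graph other than $K_6$ has at most $4n-10$ edges.

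The search proceeds downward from each non-apex maxnil graph $M$, deleting edges. The goal is to find all minimal non-apex spanning subgraphs of $M$ that satisfy $\delta\ge 3$. A graph $H$ is a candidate if it is non-apex but $H\setminus e$ is apex for every edge $e$. Apexity is tested by deleting each vertex in turn and running a linear-time planarity test. The search is pruned in two ways: we never descend below minimum degree 3, and we stop at an apex graph, since everything below it is apex. Each candidate is then tested for full minor minimality. We check that $H/e$ is apex for every edge $e$; by Lemma \ref{lem:mindeg}-type reasoning, deleting an isolated vertex does not arise. Edge-deletion and edge-contraction minors suffice, because every proper minor of $H$ is a minor of some $H\setminus e$ or $H/e$. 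Graphs that survive from different maxnil graphs are reduced to canonical form, for example with nauty, and deduplicated. The seven Petersen family graphs are added, and the resulting list is tallied by $(n,e)$, with the expectation of matching the blue entries of Table \ref{tab:mmna2} and a total of 177.

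Completeness rests on a few points. MMNA graphs need not be connected a priori, but disconnected ones are impossible, since a non-apex graph would already have a non-apex component. Graphs with fewer than $n$ vertices embedded in a maxnil graph of order $n$ are handled by running the search at each order $n\le 12$ separately, so that every MMNA graph is found as a spanning subgraph at its own order. This works because every nil graph of order $n$ extends to a maxnil graph of that same order.

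The main obstacle is computational. Exhaustively enumerating the edge-deletion lattice of 5,254 maxnil graphs on 12 vertices, which can have up to $4\cdot 12-10=38$ edges, is expensive. I would first try a depth-first search with memoization by canonical labeling, deleting edges only in a fixed order so that each subgraph is visited once. I would also prune using the apex-downward-closure observation, which should keep the explored family small, because most deletions quickly produce an apex graph. As an independent check, the results should be cross-validated against the 26-edge exhaustive search and against Pierce's list in Table \ref{tab:mmna1}.
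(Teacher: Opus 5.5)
Your computational strategy is the paper's. You descend from the non-apex maxnil graphs of each order $n\le 12$ by edge deletions, discard graphs with $\delta<3$, and stop at apex graphs. You certify minimality by checking all one-edge deletions and contractions, remove isomorphic copies with nauty, and add the seven Petersen family graphs. The pruning is sound: every graph on a deletion chain from a maxnil $M$ down to an MMNA spanning subgraph $G$ contains $G$, so it is non-apex with $\delta\ge 3$.

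\textbf{The gap: disconnected MMNA graphs.} Your completeness paragraph claims these are impossible because ``a non-apex graph would already have a non-apex component.'' That claim is false. Deleting one vertex affects only one component, so a graph with two non-planar components is non-apex even if each component is apex.

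The correct statement is as follows. If $G$ is disconnected and MMNA, every component is apex, since otherwise a single component is a smaller non-apex minor. At least two components are then non-planar. Minimality forces $G$ to be the disjoint union of two Kuratowski graphs ($K_5$ or $K_{3,3}$). Conversely, each such union is MMNA: any proper minor makes one side planar, and deleting a vertex of the other side makes that side planar too.

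So $K_5\sqcup K_5$, $K_5\sqcup K_{3,3}$ and $K_{3,3}\sqcup K_{3,3}$, with (order, size) $(10,20)$, $(11,19)$, $(12,18)$, are among the 177 graphs. The last one is the entire $(12,18)$ entry of Table~\ref{tab:mmna2}.

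Your procedure as written applies no connectivity filter, and these three graphs are linklessly embeddable with $\delta\ge 3$, so the search itself would still find them. But if you act on your claim, you report 174 instead of 177. That happens, for instance, if you discard disconnected intermediate graphs to save time, or reject disconnected outputs as bugs.

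The paper explicitly keeps disconnected graphs in its order-11 run so that its list contains all linklessly embeddable MMNA graphs of order at most 11. Its order-12 run does discard disconnected graphs, and such a filter by itself would miss $K_{3,3}\sqcup K_{3,3}$. Replace your claim with the characterization above, and make sure no connectivity pruning is applied.
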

 
Pierce presented a  list of MMNA graphs in \cite{P}.  This list contains graphs with orders ranging from six to 16 and sizes ranging from 15 to 30. Pierce's counts are presented in  Table~\ref{tab:mmna1}. 
In this table, orange squares indicate that MMNA graphs of that order and size were known to exist, though additional graphs might not yet have been found; blue squares indicate that all MMNA graphs of that order and size were believed to have been found; gray squares indicate that no MMNA graphs of that order and size exist; and white squares indicate that, at the time of \cite{P}, no MMNA graphs of that order and size had been found. In total, Table~\ref{tab:mmna1} contains 157 MMNA graphs.

In Table \ref{tab:mmna2}, we present our own findings, using the same color scheme as Pierce's, except a light pink color marks those orders and sizes where the number of edges $e$ is either strictly greater than $4n-10$ (upper right corner) or strictly less than $\lceil 3n/2 \rceil$ (lower left corner).  

The search for MMNA graphs of order 13 or larger is impeded by the fact that a complete list of all maxnil graphs of orders \( n \ge 13 \) is not yet known. However, for orders \( 13 \le n \le 17 \) and sizes up to 26, we were able to use \textit{nauty} to generate all graphs with minimum degree~3. We then tested these graphs to determine whether they are MMNA. We describe our algorithms and code in the Section \ref{sectioncode}. Our findings are presented in Table \ref{tab:mmna2}.

\begin{theorem}
 There are 286 minor minimal non-apex graphs with 26 or fewer edges. 
 \end{theorem}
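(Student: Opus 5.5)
This count is the result of an exhaustive computer search. I would prove it by combining the two finite enumerations already described: the vertex-bounded search, which gives the 177 MMNA graphs of order at most 12, and the edge-bounded search over orders 13 through 17.

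First, I bound the order. By Lemma \ref{lem:mindeg}, an MMNA graph on $n$ vertices has minimum degree at least $3$, so it has at least $\lceil 3n/2\rceil$ edges. If $e\le 26$, this forces $n\le 17$. So every MMNA graph with at most 26 edges has order at most 17. Moreover, such a graph is connected, since a disconnected non-apex graph has a non-apex proper minor (a component or two). Mader's bound $e\le 4n-10$ for MMNA graphs other than $K_6$ also cuts the range from above.

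Second, I split by order. For $n\le 12$, I take the complete list of 177 MMNA graphs from the previous theorem and count those with $e\le 26$. From Table \ref{tab:mmna2}, the graphs with $e\ge 27$ are one of order 11 and $4+1+2+1=8$ of order 12, so $177-9=168$ remain. For $13\le n\le 17$ and $e\le 26$, I use \textit{nauty} to generate all connected graphs with $\delta\ge 3$ in each cell $(n,e)$. For each graph I run the following test.

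\begin{enumerate}
\item Check non-apexity: for every vertex $v$, test planarity of $G-v$.
\item Check minimality: for each edge $f$, verify that $G-f$ and $G/f$ are apex.
\end{enumerate}

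Checking single-edge deletions and contractions suffices, because apexity is minor-closed and isolated vertices are irrelevant. Summing the counts in Table \ref{tab:mmna2} for $n\ge 13$, $e\le 26$ gives $62+45+8+3=118$. The total is then $168+118=286$.

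The main obstacle is computational rather than conceptual. The number of graphs with $\delta\ge 3$ at $n=13,14$ and $e$ near 26 is very large. I would first prune with cheap filters before running the planarity-based tests. Any graph with a vertex $v$ such that $G-v$ has at most $3(n-1)-6$ edges still needs testing, so a bound on edges alone is not enough. Instead, I would discard graphs that are apex via a quick planarity check on $G-v$ for $v$ of maximum degree. Minimality would then be checked only on the survivors. Correctness also requires trusting the \textit{nauty} generation and the planarity routine, which I would cross-validate against the known order-$\le 12$ list.
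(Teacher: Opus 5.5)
Your route is the paper's: $\lceil 3n/2\rceil\le 26$ forces $n\le 17$. The order-$\le 12$ part comes from the 177-graph list, leaving $177-9=168$ after removing the nine graphs with $e\ge 27$. Orders $13\le n\le 17$ are handled by exhaustive \textit{nauty} generation of $\delta\ge 3$ graphs, followed by the apex and one-step minor tests. Your arithmetic $62+45+8+3=118$ and $168+118=286$ agrees with Table~\ref{tab:mmna2}.

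There is, however, a false claim in your proof: minor minimal non-apex graphs need not be connected. The graphs $K_5\sqcup K_5$, $K_5\sqcup K_{3,3}$ and $K_{3,3}\sqcup K_{3,3}$ are MMNA. Deleting any single vertex leaves one Kuratowski component intact, so each is non-apex. After deleting or contracting any edge of one component, deleting a vertex of the other component leaves a planar graph. When there are exactly two components, ``a component or two'' is not a proper minor. These three graphs have at most 12 vertices and at most 20 edges, so they are necessarily among the 168 graphs you import. Your connectivity claim therefore contradicts your own count.

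Restricting the generation to connected graphs for $13\le n\le 17$ is still harmless, but it needs a correct justification. In a disconnected MMNA graph every component is apex, since otherwise that component alone would be a non-apex proper minor. If at most one component $C$ were non-planar, deleting a vertex $v$ with $C-v$ planar would make the whole graph planar. Hence at least two components are non-planar. Minimality then forces exactly two components, each minor-minimal non-planar, i.e.\ $K_5$ or $K_{3,3}$, so the order is at most 12. Either add this argument, or do as the paper does and generate all graphs with $\delta\ge 3$ without a connectivity filter.
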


For graphs of order greater than 12 and size greater than 26, we employed  different (non-exhaustive) methods to find additional MMNA graphs. 
%We observed that all MMNA graphs found so far have the \textit{claw property}. 
%We say that a graph $G$ has the \textit{claw property} if every vertex of degree $3$ in $G$ has a discrete neighborhood; that is, none of its neighbors are adjacent to each other.
We used delta--wye and wye-delta operations on the nine known MMNA graphs with more than 26 edges (one of order 11 and eight of order 12) to create candidates for the MMNA property. It is known that neither of these two operations preserve apexity (or non-apexity), but it is often the case that these operations produce good candidates.  We also used nauty to search for non-triangular graphs with more than 12 vertices or 26 edges. 
For all graphs obtained in these ways, we verified whether they are MMNA.
This search yielded 44 new MMNA graphs, for a total of 339 graphs. 
These graphs are presented in .txt format at \cite{List}.

\section{Explanation of computer programs}
\label{sectioncode}
The computer search for MMNA graphs used nauty \cite{MP} together with code written in Python available in Appendix A and Appendix B \cite{PPP}. The search for up to 12 vertices relied on previously known maximal linklessly embeddable graphs (maxnil) of orders 12 and below. 

For each order up to 12, the search began by eliminating apex graphs from the list of maxnil graphs, followed by sorting the remaining non-apex graphs by size. 
For order 11, the sizes of maxnil non-apex graphs range from 27 to 34. 
For order 12, their sizes range from 28 to 38. 
The graphs of largest size were tested for minimality using \textit{ismmna.py}. 
Graphs determined to be MMNA were saved. 
Graphs that were not MMNA were further analyzed using nauty as follows: delete one edge in every possible way, discard any resulting graph whose minimum degree is less than 3, and then remove isomorphic duplicates. The resulting list was combined with the list of maxnil graphs of the same size, after which the new list was tested with \textit{isapex.py}. Apex graphs were discarded, and the non-apex graphs were tested for minimality with \textit{ismmna.py}, and so on. Each search terminated once sufficient edge removals caused all remaining graphs to become apex.
When we tested graphs of order 11, we allowed disconnected graphs to remain in the search. As a result, the final list included all linklessly embeddable MMNA graphs of order 11 and lower.
When we tested graphs of order 12, we removed disconnected graphs so that the search would not include graphs of order at most 11 that had already been found.

The program \textit{isapex.py} takes a .g6 file as input and checks whether each graph is apex. It uses the Python module NetworkX. For each graph, the program first checks planarity. If the graph is planar it is also apex.  If the graph is not planar, the program removes one vertex at a time and checks whether the resulting subgraph is planar, continuing until either a vertex is found whose removal yields a planar graph or all vertices have been tested. The program produces two output files: one containing the apex graphs and one containing the non-apex graphs.

The program \textit{ismmna.py} performs a similar analysis, also reading a .g6 file and using NetworkX. It is assumed that the input graphs are non-apex. For each graph, the program deletes one  edge at a time and checks whether the resulting subgraph is apex, using the same method as \textit{isapex.py}. If any one-edge-deletion subgraph is non-apex, the original graph is marked as not MMNA. If all one-edge-deletion subgraphs are apex, then the program  contracts one edge at a time and checks whether the resulting minors are apex. If any contracted minor is non-apex, the graph is marked as not MMNA. If all such minors are apex, the graph is saved as MMNA.

Both programs were designed to run in a multiprocessed manner. 
%Each reads a batch of graphs and processes each batch on a separate CPU core, improving efficiency. 
For orders~11 or less, excluding the nauty commands executed separately for each size, \textit{ismmna.py} and \textit{isapex.py} ran for a combined total of 1 hour and 7 minutes. For order~12, excluding the nauty commands, the two programs ran for a combined total of 130 hours, with \textit{isapex.py} running for a cumulative 75 hours and \textit{ismmna.py} running for a cumulative 48 hours. The program \textit{isapex.py} tested roughly three times as many graphs as \textit{ismmna.py}. These computations were performed on an Apple M4 Pro with 24~GB of memory and 14 cores.

For each order $13\le n\le 17$ and size $\lceil \frac{3n}{2} \rceil \le e \le 26$,  graphs of order $n$ and size $e$ with minimal degree at least 3 were generated with \textit{nauty}. These graphs were tested with \textit{isapex.py} and \textit{ismmna.py}. The two programs ran for a combined total of 340 hours. These computations were performed on two machines: a 2023 Mac Mini, with a 12-core Apple M2 pro processor and 32GB of RAM; and a 2019 HP Z4 machine, with a 10-core Intel i9-10900X CPU at 3.7 GHz, and 256 GB of RAM.
\section{Graphs of order 13}

The search for MMNA graphs of order 13 is not yet complete.  One can start by finding all maxnil graph of order 13. Or, one can use the observation that when an edge or a vertex is deleted from a MMNA graph, an apex graph is created.  We present three ideas below.
 \begin{enumerate}
\item Consider all maximal planar graphs of order 12. 
 There are 7,595 such graphs. 
Consider a thirteenth vertex that is connected to all 12 vertices of the planar graph (a cone). 
 Add an edge every possible way to each of the 7,595 graphs to obtain a set with 187,491 non-isomorphic graphs of order 13 \cite{MP}. 
 All MMNA graphs with 13 vertices are subgraphs of graphs within this set. 
\item Consider all maximal planar graphs of order 11. 
 There are 1,249 such graphs.  Consider a twelfth vertex that is connected to all 11 vertices of the planar graph. 
 Add a vertex of degree five in every possible way to each of the 1,249 graphs to obtain a set with 843,080 non-isomorphic graphs of order 13 \cite{MP}.
  All MMNA graphs with 13 vertices are subgraphs of the graphs within this set, since no MMNA graph has minimal degree six or higher. We are going to provide an argument for this fact in this section.
\item  Start by finding all maxnil graphs of order 13, a set likely less in size than those sets in items (a) and (b). Then use the same ideas as for order 12.
%This is due in part to the fact that a complete list of maxnil graphs of order 13 is not currently available. Without such a list, the number of graphs that must be tested exceeds our computational capabilities. Moreover, if the list turns out to be very large, this approach may prove untenable.
One can organize the search for maxnil graphs of order 13 by their minimal degree, using the results of \cite{NPP} and Theorem \ref{main13} presented in this section. 
In \cite{NPP}, Naimi and the first two authors proved that any maxnil graph of order $n$ and minimal degree two is obtained by taking the clique sum over $K_2$ of a maxnil graph of order $n-1$ with a $K_3$. 
They proved that any maxnil graph of order $n$ and minimal degree three is obtained by taking the clique sum over $K_3$ of a maxnil graph of order $n-1$ with a $K_4$. 
Using the know list of 6503 maxnil graphs of order 12, we've found that there are exactly 24 maxnil graphs of order 13 and minimal degree two, and exactly 33,250 maxnil graphs of order 13 and minimal degree three. 
One can obtain the maxnil graphs of order 13 and minimal degree four or five by adding a vertex of degree four or five respectively to edge subgraphs of the maxnil graphs of degree 12, and then selecting those that have minimal degree four or five, respectively. 
Further restrictions one can be imposed on the neighborhood of a degree four vertex in a maxnil graph \cite{NPP}. The list of graphs with minimum degree five will be far smaller because of the minimum degree requirement.
These operations would yield a complete list of maxnil graphs of order 13, since there is no such graph with minimum degree at least six, as shown in Theorem \ref{main13}.
\end{enumerate}

In \cite{Jo}, J{\o}rgensen proved that every simple graph of order at most 11 with $\delta(G) \ge 6$ contains a $K_6$ minor. Following this theorem, he conjectured that every 6-connected graph which contains no $K_6$ minor is 1-apex. 
In \cite{KNTW}, Kawarabayashi,  Norine, Thomas, and Wollan prove this conjecture for sufficiently large graphs. Little is known about small order graphs. 
In \cite{OP}, A. Pavelescu and Odeneal proved that any simple graph of order 12 and minimum degree at least six has a $K_6$ minor.  
In this section, we prove a similar result for order 13, which is a strengthening of  J{\o}rgensen's conjecture for this order.\\

\noindent \textbf{Theorem 1.} \textit{Let $G$ be a simple graph of order $13$ and minimal degree 6. Then either $G$ contains a $K_6$-minor or it is isomorphic to $K_1\ast Ic$.}\\

\noindent Here $Ic$ denotes the icosahedral graph, the only five regular planar graph of order 12. The graph $K_1\ast Ic$ obtained from $Ic$ by adding a vertex and connecting this vertex to all the vertices of $Ic$. We present some prerequisites for  the proof of Theorem \ref{main13}, beginning with a result of Mader for graphs of minimum degree at least 5 and an upper bound on the number of edges that we used earlier. We use $N_G(v)$ to denote the set of vertices adjacent to the vertex $v$ and $N_G[v]= N_G(v)\cup \{v\}.$ 
For a graph $G$, we denote the number of edges (the \textit{size}) of $G$ by $|G|$.

\begin{theorem}[Mader \cite{Ma}] Let $G$ be a simple graph and assume $\delta(G)\ge 5 $ . Then $G$ contains as a minor either $K_6^-$ ($K_6$ minus one edge), or the icosahedral graph.
\label{deg5}
\end{theorem}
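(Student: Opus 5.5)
Since the statement is Mader's theorem, which the paper quotes from \cite{Ma}, the plan below reconstructs a proof along standard minimal-counterexample lines; I have not checked that every step closes. The idea is to show that any minor-minimal counterexample is forced to be the icosahedron itself.

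\emph{Setup.} Let $H$ be a minor of $G$ with $\delta(H)\ge 5$ that is minimal under taking minors with this property. If $H$ has a $K_6^-$ minor we are done, so assume it does not. Minimality gives two local facts.
\begin{enumerate}
\item \emph{Deletion.} Deleting any edge $uv$ must create a vertex of degree at most $4$. Hence every edge has an endpoint of degree exactly $5$.
\item \emph{Contraction.} Contracting an edge $uv$ lowers the degree of each common neighbour of $u$ and $v$ by one, while the merged vertex has degree $d(u)+d(v)-2-|N(u)\cap N(v)|\ge 5$. So minimality forces $uv$ to have a common neighbour of degree $5$. In particular, every edge lies in a triangle.
\end{enumerate}
Taking a vertex $v$ of degree $5$, each neighbour $w$ of $v$ has a common neighbour with $v$. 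So the link $L=H[N(v)]$, a graph on $5$ vertices, has minimum degree at least $1$.

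\emph{Analysing the link.} Next I would show that $L$ is exactly a $5$-cycle.
\begin{itemize}
\item If $L$ is dense, for instance containing $K_4$, $K_5^-$, or a vertex adjacent to all the others with a further triangle, then $v$ together with $L$ already gives $K_6^-$, or gives it after contracting a path through $H-N[v]$.
\item If $L$ is sparse, for instance having a vertex of degree $1$ in $L$, then contracting a suitable edge at $v$ keeps $\delta\ge 5$, contradicting minimality.
\end{itemize}
For the extra adjacencies I would use connectivity of $H-N[v]$: a $K_6^-$-minor-free graph with $\delta\ge 5$ should be $5$-connected, or else split along a small separator and recurse on a side. This lets me contract a component of $H-N[v]$ into $N(v)$ to add the missing edges among the neighbours of $v$.

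\emph{Globalising.} With links of degree-$5$ vertices known to be $5$-cycles, the next step is to rule out vertices of degree at least $6$. Every edge has a degree-$5$ endpoint, so high-degree vertices are surrounded by degree-$5$ vertices with cyclic links. I would use this to build a $K_6^-$ minor around any vertex of degree at least $6$, or to find an edge whose deletion keeps $\delta\ge5$. Once $H$ is $5$-regular, every link is a $5$-cycle, so $H$ is a triangulated closed surface in which every vertex has degree $5$. Euler's formula then gives $\chi=n/6>0$ with $n=12$ on the sphere, so $H$ is the icosahedron; the projective case with $6$ vertices is $K_6$, which contains $K_6^-$.

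\emph{Main obstacle.} The main difficulty is the case analysis of the link together with excluding vertices of degree at least $6$. Here the argument needs the $K_6^-$ constructions via paths outside $N[v]$, which requires a connectivity or extremal edge-count input. For that I would first try Mader's bound on the number of edges of a graph with no $K_6^-$ minor.
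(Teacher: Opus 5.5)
The paper gives no proof of this statement. It is quoted from Mader \cite{Ma} as background, and it is not actually invoked in the proof of Theorem~\ref{main13}, so there is no internal argument to compare with. Judged on its own, your proposal has a genuine gap. The reduction at the start and the endgame are sound:
\begin{enumerate}
\item the deletion fact;
\item the conclusion that every edge lies in a triangle, so $\delta(L)\ge 1$ for the link of a degree-$5$ vertex;
\item the Euler count. A $5$-regular graph all of whose links are $5$-cycles triangulates a closed surface with $\chi=n/6$, which gives the icosahedron on the sphere or $K_6$ on the projective plane.
\end{enumerate}
One small correction: the merged vertex need not have degree $\ge 5$. If $|N(u)\cap N(v)|\ge d(u)+d(v)-6$ (for instance $d(u)=d(v)=5$ and $N[u]=N[v]$), it has degree $4$. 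In that case ``some common neighbour has degree $5$'' is not forced, although the triangle conclusion survives.

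The missing part is the middle: showing that links are $5$-cycles and that $H$ is $5$-regular. That is the entire content of Mader's theorem, and the mechanisms you suggest for it do not work as stated.
\begin{enumerate}
\item \emph{Sparse links.} Suppose $w\in N(v)$ has a unique common neighbour $x$ with $v$. Then $H/vw$ still has $\delta\ge 5$ unless $d(x)=5$. But $d(x)=5$ is exactly what minimality already gives you, so there is no contradiction, and you do not identify any other edge whose contraction is safe.
\item \emph{Connectivity.} Within your minimal-$H$ setup, ``recursing on a side'' means exhibiting a proper minor of $H$ with $\delta\ge 5$. Take a separation $(A,B)$ with $|A\cap B|\le 4$ and contract $B\setminus A$ to make $A\cap B$ a clique (itself not guaranteed for a $4$-separator). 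A separator vertex gains at most $3$ neighbours this way and can end with degree below $5$, so the side graph need not be such a minor. Your dense-link case depends on this step, since it needs paths through $H-N[v]$ to attach the fifth neighbour to the $K_4$.
\item \emph{Edge counts.} These cannot substitute. Planar triangulations have $3n-6$ edges and no $K_6^-$ minor, because $K_6^-\supseteq K_5$. By contrast, $\delta\ge 5$ guarantees only $5n/2<3n-6$ edges once $n>12$.
\item \emph{High-degree vertices.} Excluding vertices of degree $\ge 6$ is not argued at all.
\end{enumerate}
To close the argument you need a genuinely global way of building $K_6^-$ models, or an induction hypothesis that survives small separations. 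That is what Mader's paper supplies; the local contractions you list do not.
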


\begin{theorem}[Mader \cite{Ma}] For every integer $2\le p \le 7$ and every simple graph $G$ of order $n\ge p-1$ that has no minor isomorphic to $K_p$, $G$ has at most $(p-2)n-$${p-1}\choose {2}$ edges.
\label{Mader}
\end{theorem}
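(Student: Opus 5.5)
The plan is to run Mader's minimal-counterexample argument. Fix $p$ with $2\le p\le 7$ and suppose the statement fails. Among all counterexamples choose $G$ with the fewest vertices, and then the fewest edges. So $G$ has order $n\ge p-1$, no $K_p$-minor, and exactly $(p-2)n-\binom{p-1}{2}+1$ edges, since deleting edges one at a time eventually reaches this value. When $n=p-1$ the bound equals $\binom{p-1}{2}$, the size of $K_{p-1}$, so it cannot be exceeded. Hence $n\ge p$, and every minor of $G$ of order at least $p-1$ satisfies the bound.

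First I will extract local structure from minimality.
\begin{enumerate}
\item \emph{Vertex deletion.} If some vertex $v$ had $\deg v\le p-2$, then $G\setminus v$ would have order $n-1\ge p-1$, would lose at most $p-2$ edges, and would still violate the bound, contradicting minimality. Hence $\delta(G)\ge p-1$.
\item \emph{Edge contraction.} Contracting an edge $uv$ removes $1+|N(u)\cap N(v)|$ edges and one vertex, which lowers the bound by $p-2$. If $|N(u)\cap N(v)|\le p-3$, the contracted graph would be a smaller counterexample; it is still $K_p$-minor-free because it is a minor of $G$. Hence every edge of $G$ lies in at least $p-2$ triangles.
\item \emph{Low-degree vertex.} Since $|E(G)|<(p-2)n$, the average degree is below $2(p-2)$. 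So some vertex $v$ has $p-1\le \deg v\le 2p-5$.
\end{enumerate}

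Next, let $H=G[N(v)]$. By step 2, every vertex of $H$ has at least $p-2$ neighbours inside $H$. Thus $H$ has order $h$ with $p-1\le h\le 2p-5$ and $\delta(H)\ge p-2$. The whole problem reduces to the following claim: \emph{such an $H$ contains a $K_{p-1}$-minor.} Adding $v$, which is adjacent to all of $H$, then yields a $K_p$-minor in $G$, a contradiction.

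To prove the claim I will induct on $p$, using the statement for $p-1$ applied to $H$. This needs $(p-2)h/2>(p-3)h-\binom{p-2}{2}$, which is equivalent to $h<(p-2)(p-3)/(p-4)$ for $p\ge5$. It settles $p\le 4$ at once, since those cases are trivial or follow because $\delta\ge2$ forces a cycle. It also settles $p=5$ with $h\le5$, and part of the range for $p=6,7$.

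The remaining cases I will handle by hand, using that the complement $\overline H$ has maximum degree at most $h-1-(p-2)$, which is small.
\begin{itemize}
\item $p=6$, $h=7$: $\overline H$ has maximum degree at most $2$. Deleting one vertex and contracting a suitable edge, or directly picking $5$ branch sets among $7$ vertices, gives a $K_5$-minor.
\item $p=7$, $h\in\{7,8,9\}$ with $\delta(H)\ge5$: I need a $K_6$-minor. For $h=7$, $\overline H$ is a matching, and contracting one complement-pair edge through a common neighbour works. For $h=8,9$, $\overline H$ has maximum degree at most $2$ or $3$, and I will choose three disjoint branch-set pairs covering the complement edges.
\end{itemize}
I expect this last step, the case $p=7$ with $h=9$, to be the main obstacle, because it requires a careful case analysis on the structure of $\overline H$ (unions of paths and cycles, or subcubic graphs).
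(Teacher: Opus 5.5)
The paper gives no proof of this statement; it quotes it from Mader, so your argument has to stand on its own. Your setup is fine. Minimality does give $\delta(G)\ge p-1$, every edge in at least $p-2$ triangles, and a vertex $v$ with $p-1\le\deg v\le 2p-5$ whose neighbourhood $H$ has $\delta(H)\ge p-2$. Your induction also settles $p\le 5$.

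\textbf{The gap.} Everything else rests on the claim that every graph of order $h\le 2p-5$ with minimum degree at least $p-2$ has a $K_{p-1}$-minor. For $p=6,7$ this is false in every case your counting leaves open.
\begin{itemize}
\item $p=6$, $h=6$. You dropped this case from your list: $\delta(H)\ge 4$ only gives $12=3\cdot 6-6$ edges, so the $K_5$ bound does not apply. The octahedron $K_{2,2,2}$ is planar with minimum degree $4$.
\item $p=6$, $h=7$. The pentagonal bipyramid $\overline{K_2}\ast C_5$ is planar with minimum degree $4$, even though its complement $K_2\cup C_5$ has maximum degree $2$. So your complement-degree argument cannot work here.
\item $p=7$, $h=7,8,9$. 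Coning the octahedron, the bipyramid, or the square antiprism $C_8^2$ gives graphs of order $7,8,9$ with minimum degree $5$ and no $K_6$-minor, because a $K_6$-minor in $K_1\ast P$ leaves a $K_5$-minor in $P$. The first of these is $K_7$ minus three disjoint edges, which is exactly your case where $\overline H$ is a matching.
\end{itemize}

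\textbf{What is missing.} For $p=6,7$ no argument confined to $G[N[v]]$ can succeed; you must use $G-N[v]$.
\begin{itemize}
\item If a component $C$ of $G-N[v]$ has non-adjacent neighbours $x,y\in N(v)$, contracting an $x$--$y$ path through $C$ into $x$ adds the edge $xy$ to $H$, and $v$ stays adjacent to all of $H$.
\item A component whose attachment set $S$ is a clique with $|S|\le p-2$ is excluded by minimality. The two sides of $S$ are smaller, and $(p-2)(n+|S|)-2\binom{p-1}{2}-\binom{|S|}{2}\le (p-2)n-\binom{p-1}{2}$.
\end{itemize}
For $p=6$ this repair suffices. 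The octahedron and the bipyramid are the only bad neighbourhoods, and one added edge gives $13>12$ or $16>15$ edges, hence a $K_5$-minor in $H$.

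For $p=7$ it does not suffice. Take $H=\overline{K_3}\ast C_5$: it is $5$-regular on $8$ vertices with clique number $3$. Any $K_6$-minor on $8$ vertices needs four singleton branch sets forming a $K_4$, so $H$ has no $K_6$-minor. Adding an edge between two of the three independent vertices still leaves no $K_6$-minor. So a single connection through $G-N[v]$ is not enough, and you need a real analysis of how $G-N[v]$ attaches to $N(v)$. That analysis is the substantive part of Mader's proof, and it is missing. As written, your proposal proves the statement only for $p\le 5$.
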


\noindent For $n=6$, Theorem \ref{Mader} says that a graph of order $n$ and more than $4n-10$ edges has a $K_6$-minor. 
Graphs of order $n$ and size $4n-10$ which do not have $K_6$-minors have a specific structure, as it follows from the following theorem due to J{\o}rgensen.

\begin{theorem}[J{\o}rgensen \cite{Jor}] Let $p$ be a natural number, $5\le p \le 7$. Let G be a graph with
$n$ vertices and $(p-2)n-$${p-1}\choose {2}$ edges that is not contractible to $K_p$. Then
either G is an $MP_{p-5}$-cockade or $p =7$ and $G$ is the complete 4-partite
graph $K_{2,2,2,3}$.
\label{cockade}
\end{theorem}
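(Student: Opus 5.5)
The statement is J\o rgensen's characterization of the extremal graphs, and I would prove it the way Mader proves Theorem~\ref{Mader}: take a counterexample $G$ with $n$ minimum and argue by contraction and separation. Throughout, write $f_p(n)=(p-2)n-\binom{p-1}{2}$.

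The first step is local density. Let $xy$ be an edge. Contracting $xy$ produces a graph on $n-1$ vertices, with no $K_p$ minor, that has $|G|-1-t(xy)$ edges. Here $t(xy)$ is the number of common neighbours of $x$ and $y$. Theorem~\ref{Mader} bounds this by $f_p(n-1)=f_p(n)-(p-2)$, so $t(xy)\ge p-3$ for every edge.
\begin{itemize}
\item Every edge therefore lies in at least $p-3$ triangles.
\item If some edge has $t(xy)=p-3$ exactly, then $G/xy$ is again extremal and $K_p$-minor-free. By minimality of $G$ it is an $MP_{p-5}$-cockade, or $K_{2,2,2,3}$ when $p=7$.
\end{itemize}
In the second case I would try to ``uncontract'' $xy$. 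The aim is to show that the only way to split a vertex of a cockade so as to stay extremal and $K_p$-minor-free is to produce another cockade. The most likely mechanism is that the split happens inside a single $MP_{p-5}$ block, or across a clique-sum separator, and one checks that the planar structure of that block survives.

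The second step handles separations. Suppose $G$ has a separation $(A,B)$ with $S=A\cap B$.
\begin{itemize}
\item If $|S|\le p-3$, or $|S|=p-2$ and $S$ is not a clique, add the missing edges of $S$ to both sides. Each side stays $K_p$-minor-free, since $G$ can be contracted onto it through the other side when that side is connected to $S$ suitably.
\item Apply Theorem~\ref{Mader} to $G[A]^+$ and $G[B]^+$ and add the two bounds. The total contradicts $|G|=f_p(n)$ unless $|S|=p-2$, $S$ is a clique, and both sides are extremal.
\item In that remaining case both sides are cockades by minimality, and so is $G$, being their clique-sum over $K_{p-2}$.
\end{itemize}
We may therefore assume that $G$ is $(p-2)$-connected, every $(p-2)$-separator is a clique, and every edge lies in at least $p-2$ triangles.

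The third step is a neighbourhood analysis. Since $|G|=f_p(n)$, the average degree is below $2(p-2)$, so some vertex $v$ has $\deg v\le 2p-5$. Let $H=G[N(v)]$.
\begin{itemize}
\item $H$ has minimum degree at least $p-2$, because every edge at $v$ lies in at least $p-2$ triangles.
\item $H$ has no $K_{p-1}$ minor, since otherwise adding $v$ gives a $K_p$ minor.
\end{itemize}
For $p=5$ this forces $H$ to be a cycle, which yields the maximal-planar structure (Wagner's case). For $p=6$, $H$ is a graph on at most $7$ vertices with minimum degree at least $4$ and no $K_5$ minor. The small possibilities ($C_5$-joins, $K_{2,2,2}$, and similar) can be listed, and in each case we either build a $K_6$ minor using paths outside $N[v]$ (available by connectivity) or recognize $K_1\ast$(maximal planar). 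For $p=7$ the same enumeration, on at most $9$ vertices with minimum degree at least $5$, should lead either to a cockade or to the exceptional graph $K_{2,2,2,3}$.

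I expect the main obstacle to be this last step for $p=7$: closing the case analysis of neighbourhoods while producing $K_{2,2,2,3}$ as the unique sporadic example. My first attempt there would be to use the extra-triangle condition, every edge in at least $5$ triangles, to force $H$ to be nearly complete multipartite, and then show that the outside vertices must attach exactly as in $K_{2,2,2,3}$.
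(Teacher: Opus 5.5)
The paper does not prove this statement. It quotes it from J\o rgensen and uses it as a black box in Lemma~\ref{12cockade} and in Case~1 of the proof of Theorem~\ref{main13}. Judged on its own, your proposal has a genuine gap: the decisive steps are announced rather than carried out.

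\textbf{The uncontraction step.} You state the ``uncontraction'' of an edge $xy$ with $t(xy)=p-3$ only as an aim, and this case cannot be set aside. \emph{Every} graph in the conclusion contains such an edge:
\begin{itemize}
\item In a leaf block $K_{p-5}\ast H$ of a cockade, an edge of $H$ with an end outside the attaching clique has exactly $2+(p-5)=p-3$ common neighbours. The same count holds in a $K_{p-1}$ block.
\item In $K_{2,2,2,3}$, an edge between a $2$-class and the $3$-class has $9-5=4=p-3$ common neighbours.
\end{itemize}
So under the standing hypothesis of your third step (every edge in at least $p-2$ triangles), no admissible graph exists at all. That step can only aim at a contradiction; it can never ``recognize $K_1\ast$(maximal planar)'' or produce $K_{2,2,2,3}$. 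Your $p=5$ remark shows the mix-up: if $\delta(H)\ge 3$, then $H$ is not a cycle, and in fact it contains a $K_4$ minor. All the structural content of the theorem therefore lies in showing that splitting a vertex of a cockade (or of $K_{2,2,2,3}$) into an edge with exactly $p-3$ common neighbours yields either a cockade or a $K_p$ minor. Already for $p=6$ this requires analysing how $N(x)$ and $N(y)$ meet the planar part when, say, the apex of $K_1\ast H$ is split. The proposal contains no such argument, and the neighbourhood enumerations for $p=6,7$ are likewise described but not performed.

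\textbf{The separation step.} This step is also unjustified as written. Contracting the other side onto $S$ gives at best a new vertex adjacent to all of $S$, or, after merging it into some $s\in S$, the edges from $s$ to $S$. It does not make $S$ a clique. That would require a rooted $K_{|S|}$ minor on $S$ inside the other side, which connectivity alone does not provide. Without the added edges, your count only yields $|E(G[S])|\le f_p(|S|)$, which gives no contradiction in cases you need to exclude:
\begin{itemize}
\item a non-clique $S$ with $|S|=p-2$, since $f_p(p-2)=\binom{p-2}{2}$;
\item for instance $p=7$ with $G[S]\cong K_4^-$, since $f_7(4)=5$.
\end{itemize}

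\textbf{What is sound.} Three pieces of the proposal hold up:
\begin{itemize}
\item the bound $t(xy)\ge p-3$, obtained from Theorem~\ref{Mader};
\item the clique-sum case where $S$ is a $(p-2)$-clique and both sides are tight (there $K_{2,2,2,3}$ cannot be a side, since it has no $K_5$);
\item the existence of a vertex of degree at most $2p-5$.
\end{itemize}
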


When looking for $K_6$-minors for graphs of size $4n-10$, it is enough to study $MP_1-$cockades \cite{Jor}.

\begin{definition}
$MP_{1}$-cockades are defined recursively as follows:
\label{def:cockade}
  \begin{enumerate}
    \item $K_{5}$ is an $MP_1$-cockade and if $H$ is a 4-connected maximal 
          planar graph then $H*K_1$ is an $MP_1$-cockade. 
    \item Let $G_1$ and $G_2$ be disjoint $MP_1$-cockades, and let 
          $x_1,x_2, x_3$, and $x_{4}$ be the vertices of an induced $K_{4}$ subgraph in $G_1$ and let 
          $y_1,y_2, y_3$, and $y_{4}$ be the vertices of an induced  $K_{4}$ subgraph in $G_2.$ Then
          the graph obtained from $G_1 \cup G_2$ by identifying $x_j$ and 
          $y_j,$ for j=1,2,3,4, is an $MP_{1}$-cockade.
  \end{enumerate}
\end{definition}

The second item in the definition describes a clique sum of $G_1$ and $G_2$ over a $K_4$ subgraph. We use the following lemma, which is a corollary of Theorem \ref{cockade}.

\begin{lemma} Let $M$ denote a graph of order 12 and size 38, such that $\delta(M)\ge 5$. Assume that $M$  is not apex and has at most three vertices of degree 5. Then $M$ contains a $K_6$ minor.
\label{12cockade}
\end{lemma}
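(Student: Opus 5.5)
We argue by contradiction. Suppose $M$ has order $12$, size $38=4\cdot 12-10$, $\delta(M)\ge 5$, at most three vertices of degree $5$, is not apex, and has no $K_6$ minor. Theorem \ref{cockade} with $p=6$ then says $M$ is an $MP_1$-cockade. The plan is to show that every $MP_1$-cockade of order $12$ with these degree restrictions is apex, which contradicts the hypothesis.

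\textbf{Step 1: the base case.} Suppose first that $M$ is a base cockade. It cannot be $K_5$, since the order is $12$, so $M=H\ast K_1$ for a $4$-connected maximal planar graph $H$ of order $11$. Deleting the cone vertex leaves $H$, which is planar. Hence $M$ is apex, a contradiction.

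\textbf{Step 2: the composite case.} Otherwise $M$ is a clique sum over $K_4$ of smaller $MP_1$-cockades. Unwinding the recursion, $M$ is assembled from pieces $B_1,\dots,B_k$ with $k\ge 2$. Each piece is $K_5$ or $H_i\ast K_1$ with $H_i$ a $4$-connected maximal planar graph, and the pieces are glued along $K_4$'s in a tree-like pattern. Consider a leaf piece $B$, one glued to the rest along a single $K_4$. Its vertices outside that $K_4$ are adjacent only to vertices of $B$.
\begin{itemize}
\item If $B=K_5$, its one private vertex has degree $4$. This contradicts $\delta(M)\ge 5$, so every leaf piece has the form $H\ast K_1$.
\item In $B=H\ast K_1$ with $H$ of order $m$, the attaching $K_4$ must contain the cone vertex, because a maximal planar $4$-connected graph contains no $K_4$. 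So the $K_4$ is the cone vertex together with a facial triangle of $H$. The $m-3$ private vertices of $B$ lie in $H$ minus a triangle.
\item Every vertex of $H$ has degree at least $4$ in $H$, and $4$-connected maximal planar graphs with $m\ge 7$ have many degree-$4$ or degree-$5$ vertices. A private vertex of $B$ has degree $\deg_H+1$ in $M$, so private vertices with $\deg_H=4$ have degree $5$ in $M$.
\end{itemize}
Euler's formula gives average degree below $6$ in $H$, and the only $4$-connected maximal planar graph of order $6$ is the octahedron, all of whose degrees are $4$. Using these facts, each leaf piece contributes at least two, and I expect typically more, private vertices of degree $5$ in $M$. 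A tree of pieces has at least two leaves. Counting the forced degree-$5$ vertices over the leaves should then exceed three. The count must respect the order constraint: the piece orders satisfy $\sum |B_i|-4(k-1)=12$.

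\textbf{Step 3: finishing.} If the leaf count does not immediately exceed three in every configuration, the remaining cases are few. With $12$ vertices and pieces of order at least $5$, $k\le 8$, and in practice only a handful of shapes such as $7+9$ or $6+10$ survive the degree bound. For these I would check directly that a vertex common to all the pieces exists. The natural candidate is a shared cone apex of the $H_i\ast K_1$ pieces, whose deletion leaves a clique sum of planar graphs over triangles, which is planar. That would again make $M$ apex, contradicting the hypothesis.

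The main obstacle is Step 2. It needs a lower bound on the number of low-degree vertices of $H$ that avoid a prescribed facial triangle. The delicate case is a leaf with $H$ close to icosahedral, where few vertices have degree $4$. If that bound fails, I would switch to Step 3's route: show directly that the cone vertices of all pieces must coincide, so that $M$ is apex.
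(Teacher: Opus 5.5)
Your strategy (rule out every composite $MP_1$-cockade using the degree hypotheses, so that $M=H\ast K_1$ is apex) is viable and differs from the paper's. As written, though, the decisive Step 2 is not proved. You only say each leaf ``contributes at least two, and I expect typically more'' vertices of degree $5$, and that counting over leaves ``should'' exceed three. Step 3 is a fallback with no argument: nothing shows the cone vertices coincide, and the shape $6+10$ cannot occur, since no piece has order $6$.

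The obstacle you name, a nearly icosahedral leaf, cannot arise. You wrote down the order equation but never used it. Take two leaves $B_i=H_i\ast K_1$ with $m_i=|V(H_i)|$, and note every other piece has order at least $5$. Then $\sum|B_i|-4(k-1)=12$ gives $m_1+m_2+k-4\le 12$. Hence $m_1+m_2\le 14$, and each $m_i\le 8$.

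Now write Euler's formula as $\sum_{v\in V(H)}(6-\deg_H v)=12$ and use $\deg_H\ge 4$. This gives $12\le 2n_4+n_5\le n_4+m$, so $H$ has at least $12-m$ vertices of degree $4$. At least $9-m$ of them lie outside the attaching triangle, and these have degree $5$ in $M$. The two leaves therefore supply at least $18-(m_1+m_2)\ge 4$ vertices of degree $5$, a contradiction.

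``At least two per leaf'' is not the right target: the Euler bound alone gives only one private degree-$5$ vertex when $m=8$. It is the sum over the two leaves that works. With this inserted, the composite case is impossible outright and Step 3 can be dropped.

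For comparison, the paper never looks inside the pieces. It uses only that a composite cockade has a separating $K_4$, call it $S$, and disposes of components of $M\setminus S$ with at most two vertices directly. In the remaining splits $3+5$ and $4+4$, it sums degrees over a suitable component $Q$. This shows $\langle Q,S\rangle$ has more than $4|V|-10$ edges, so Mader's bound yields a $K_6$ minor. Your completed argument instead uses the structure of the base pieces (4-connected triangulations and Euler counting). It gives the slightly more informative conclusion that no composite $MP_1$-cockade of order $12$ meets the degree hypotheses at all.
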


\begin{proof} By Theorem \ref{cockade} and Definition \ref{def:cockade}, $M$ must be the clique sum over $S\simeq K_4$ of two $MP_1$-cockades. If $M\setminus S$ has more than two connected components, then at least one of the connected components has at most 2 vertices. 
Since $\delta(M)\ge 5$, it follows that $M$ has a $K_6$ subgraph induced by the two vertices in a connected component and the vertices of $S$. Thus we may assume $M\setminus S$ has exactly two connected components. Let $Q_1$ and $Q_2$ denote the connected components of $M\setminus S$, with $|V(Q_1)|\le |V(Q_2)|$. Let $L_1$ denote the set of edges of $M$ with one endpoint in $Q_1$ and the other endpoint in $S$. Let $L_2$ denote the set of edges of $M$ with one endpoint in $Q_2$ and the other endpoint in $S$. As above, we may assume $|V(Q_1)|\ge 3.$\\

\noindent Assume $|V(Q_1)|=3$. If there exists $v\in V(Q_1)$ with  $\deg_M(v)>5$, then
$$16 \le \sum_{v\in V(Q_1)} \deg_M(v)=2|E(Q_1)|+|L_1|\le 3+ |E(Q_1)|+|L_1|.$$
The induced subgraph $\big<Q_1, S\big>_M$ has seven vertices and $|E(Q_1)|+|L_1|+|S| \ge 13+6=19$
edges. By Theorem \ref{Mader}, $\big<Q_1, S\big>_M$ (and $M$) contains a $K_6$-minor.\\
Assume $\deg_M(v)=5$ for all $v\in V(Q_1)$. 
Since $M$ had at most three vertices of degree five, it follows that $\deg_M(v)\ge 6$ for every vertex $v\in V(Q_2)$. In particular, every vertex of $Q_2$ is adjacent to at least a vertex of $S$. If $|E(Q_2)|=10$, then $Q_2\simeq K_5$, and $M$ has a $K_6$ minor obtained by contracting the subgraph $\big<Q_1,S\big>_M$ to a single vertex.
If $|E(Q_2)|\le 9,$ then 
$$30  \le \sum_{v\in V(Q_2)} \deg_M(v)=2|E(Q_2)|+|L_2|\le 9 + |E(Q_2)|+|L_2|.$$ The induces subgraph $\big<Q_2, S\big>_M$ has nine vertices and 
$|E(Q_2)|+|L_2|+|S| \ge 21+6=27$ 
edges. By Theorem \ref{Mader}, $\big<Q_2, S\big>_M$ contains a $K_6$-minor.\\
Assume $|V(Q_1)|=|V(Q_2)|=4$. Without loss of generality,  $Q_2$ has at most one vertex of degree $5$. Then 
$$23\le\sum_{v\in V(Q_2)} \deg_M(v) = 2|E(Q_2|+|L_2|\le 6+ |E(Q_2|+|L_2|.$$ 
The induced subgraph $\big<Q_2,S\big>_M$  has eight vertices and $|E(Q_2)|+|L_2|+6 \ge 17+6=23$ edges. 
By Theorem \ref{Mader}, $\big<Q_2, S\big>_M$ contains a $K_6$-minor.\\

\end{proof}

\begin{proof}[Proof of Theorem \ref{main13}]

Let $G$ denote a simple graph of order 13 and minimal degree $\delta(G)\ge 6$. If $G$ is apex, then there exists $v\in V(G)$ such that $G\setminus\{v\}$ is planar of minimal degree at least $5$. On one hand, by Euler's theorem for planar graphs, $G\setminus\{v\}$ has at most $3\times 12 -6 =30$ edges. On the other hand, since $\delta(G\setminus\{v\})\ge 5$, $G\setminus\{v\}$ has at least 30 edges. It follows that $G\setminus\{v\}$ is 5-regular. The only 5-regular planar graph with 12 vertices is the icosahedral graph $Ic$. Since $\delta(G)\ge 6$, it follows $v$ is adjacent to all vertices in $G\setminus \{v\}$ and $G\simeq K_1 \ast Ic$. For the rest of the proof we shall assume that  $G$ is not apex.

\noindent Since $\delta(G)\ge 6$, $|E(G)|\ge 39$. By Theorem \ref{Mader}, if the size of $G$ is at least 43, then $G$ contains a $K_6$ minor. We organize the prove Theorem \ref{main13} by considering the size of $G$, $39 \le |E(G)|\le 42$. \\

\noindent {\bf Case 1} Assume $|E(G)|=42$. Since $G$ is not apex, by Theorem \ref{cockade}, it follows that $G$ is a clique sum over $S\simeq K_4$ of two $MP_1$-cockades.
If any of the connected components of $G\setminus S$ has at most 3 vertices, since $\delta(G)\ge 6$, then any such connected component must have exactly 3 vertices and that component together with $S$ must induce a $K_7$ subgraph of $G$. It follows that $G\setminus S$ has exactly two connected components, of order 4 and 5, respectively. Let $Q$ denote the connected component of $G\setminus S$ of order 4. Let $L$ denote the set of edges of
$M$ with one endpoint in $Q$ and the other endpoint in $S$. Then 
$$24\le\sum_{v\in V(Q)} \deg_M(v) = 2|E(Q|+|L|\le 6+ |E(Q|+|L|.$$ 
The induced subgraph $\big<Q,S\big>_M$  has eight vertices and $|E(Q)|+|L|+6 \ge 18+6=24$ edges. 
By Theorem \ref{Mader}, $\big<Q, S\big>_M$ contains a $K_6$-minor.\\

\noindent {\bf Case 2} Assume $|E(G)|=41$. The degree of any vertex of $G$ is at most 10. Let $V(G)=\{v_1, v_2, \ldots, v_{13}\}$ and assume $v_1v_2\in E(G)$. Let $G'$ be the minor of $G$ obtained by contracting the edge $v_1v_2$.  If $v_1$ and $v_2$ share less that two neighbors, then $G'$ has 12 vertices and at least 39 edges, and by Theorem \ref{Mader}, $G'$ contains  a $K_6$-minor. 
If $v_1$ and $v_2$ share exactly two neighbors, then $G'$ has 38 edges and at most three vertices of degree 5 (the possible such vertices are the vertex obtained through the contraction of $v_1v_2$, and the two common neighbors of $v_1$ and $v_2$). 
If $G'$ is apex, then $G'$ is maximal apex and it has a vertex of degree 11. This vertex can only be obtained through the contractions of $v_1v_2$ and only if $\deg(v_1)+\deg(v_2)=15.$ If $G'$ is not apex, by Lemma \ref{12cockade} it contains a $K_6$ minor.\\

\noindent \textit{Assume $G$ has a vertex of degree at least 9.}
The  possible degree sequences for $G$ are $(6, 6, 6, 6, 6, 6, 6, 6, 6, 6, 6, 6, 10)$ and $(6, 6, 6, 6, 6, 6, 6, 6, 6, 6, 6, 7, 9)$.  Let $v_{13}\in V(G)$ such that $\deg(v_{13})\ge 9$.  There exists $v_1\in V(G)$ with $\deg(v_1)=6$ and $v_1v_{13}\notin E(G)$.
Let $N=N_G(v_1)=\big <v_2, v_3,\ldots, v_7 \big>_G$. Let $H=\big <v_8,\ldots,v_{13}\big >_G$ and let $L$ denote the set of edges of $G$ with one endpoint in $N$ and the other in $H$. 
 If $v_1$ and $v_2$ share two neighbors, since $\deg(v_1)+\deg_G(v_2)<15$, as described above, $G$ contains a $K_6$-minor.
We assume that $v_1$ and $v_2$ share at least three neighbors. 
This implies that $E(N)\ge 9$.
Adding degrees in both $N$ and $H$, respectively,  we get
\[\sum_{v\in V(N)} \deg_G(v)= 2|E(N)|+|L|+6\le 6 \times 5+7=37,\]
\[\sum_{v\in V(H)} \deg_G(v)= 2|E(H)|+|L|\ge 6 \times 5+9=39.\]
Combining the inequalities gives $|E(H)|\ge |E(N)|+4\ge 13$. Since $\delta(G)\ge 6$, every vertex of $H$ must be adjacent to a vertex of $N$, and contracting all six edges incident to $v_1$ gives a minor of $G$ of order 7 and size at least 19. By Theorem \ref{Mader}, this minor has a $K_6$-minor.\\

\noindent \textit{Assume that the maximum degree of $G$ is 8}. The possible degree sequences of $G$ are  $(6, 6, 6, 6, 6, 6, 6, 6, 6, 6, 6, 8, 8),$ and  $(6, 6, 6, 6, 6, 6, 6, 6, 6, 6, 7, 7, 8).$
%In either case, if any of the vertices of degree at least 7 are adjacent, deleting the edge incident to both of them gives a graph of order 13, size 40, and minimum degree 6. We address this configuration in the next case. 
Without loss of generality, assume that $deg_G(v_{13})=8$, $deg_G(v_1)=6$ and $v_1v_{13} \notin E(G)$. Let $N$, $H$, and $L$ be as above. As above, we can assume $|E(N)|\ge 9.$
%Let $N=N_G(v_1)=\big < v_2,...,v_7\big >_G$,  let $H=\big < v_8,...,v_{13}\big >_G$ be the subgraph induced by the non-neighbors of $v_1$, and let $L$ denote the set of edges with one endpoint in $N$ and the other in $H$. 
If $H$ contains another vertex of degree at least 7 in $G$ besides $v_{13}$, then 

\[ \sum_{v\in V(N)} \deg_G(v)= 2|E(N)|+|L|+6\le 37,\]
\[\sum_{v\in V(H)} \deg_G(v) = 2|E(H)|+|L|\ge 6\times 4 +7+8=39.\]

\noindent Combining the inequalities gives $|E(H)|\ge |E(N)|+4\ge 13$. Contracting all six edges incident to $v_1$ gives a minor of order 7 and size at least 19, which must contain a $K_6$ minor by Theorem \ref{Mader}. 

Assume $H$ contains no other  vertex of degree at least 7 besides $v_{13}$. Then we have $2|E(N)|+|L|=32$, $2|E(H)|+|L|=38$, and $|E(H)|=|E(N)|+3$. If  $|E(N)|>9$ then $|E(H)|\ge 13$ and a $K_6$ minor follows as above. 
If $|E(N)|=9$, then $|L|=14$, $|E(H)|=12$, and $N$ must be 3-regular.
The subgraph $N$ isomorphic to either $K_{3,3}$, or the prism graph. We look at the two cases below. Since $H$ has size 12 and order 6, is follows that $H$ is 2-connected.\\ 

\noindent Assume $N$ is isomorphic to $K_{3,3}$.  Since $deg_G(v_{13})=8$, $v_{13}$ is adjacent to three or more vertices of $N$. When an edge between $v_{13}$ and one of its neighbors in $N$ is contracted at least one new edge between the vertices of $N$ is created.  The subgraph $H\setminus \{v_{13}\}$ is connected and each vertex of $N$ connects to at least one vertex of $H\setminus \{v_{13}\}$. Contracting the edges of $H\setminus \{v_{13}\}$ and then contracting an edge between $v_{13}$ and one of its neighbors in $N$ gives a minor that contains a subgraph isomorphic to the graph in Figure \ref{k33+e}. Contracting the edges $cd$ and $v_1e$ gives a $K_6$-minor.

\begin{figure}[htpb!]
\begin{center}
\begin{picture}(150, 110)
\put(0,0){\includegraphics[width=2in]{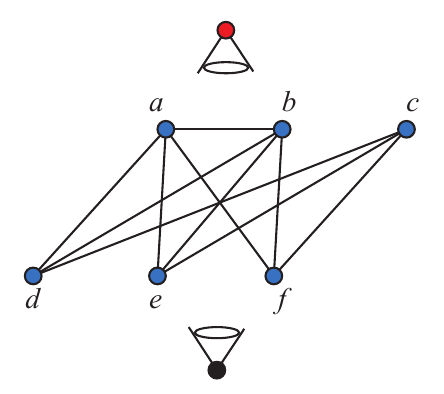}}
\put(70,-5){$v_1$}
\put(73,125){$v_8$}
\end{picture}
\caption{Minor of $G$. The vertices $v_1$ and $v_8$ are adjacent to $a, b, c, d, e,$ and $f$.}
\label{k33+e}
\end{center}
\end{figure} 

\noindent Assume that $N$ is isomorphic to the prism graph, with labels as in Figure \ref{prism}.
\begin{figure}[htpb!]
\begin{center}
\begin{picture}(150, 90)
\put(0,0){\includegraphics[width=2in]{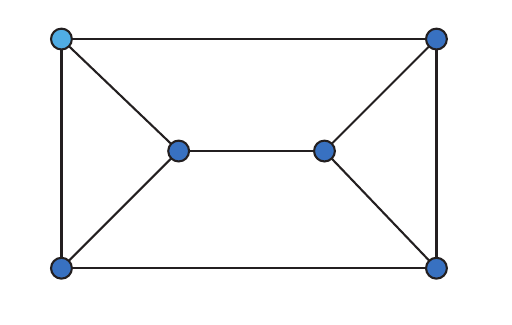}}
\put(5,87){$v_2$}
\put(123,87){$v_3$}
\put(5,5){$v_6$}
\put(123,5){$v_7$}
\put(50,40){$v_4$}
\put(80,40){$v_5$}
\end{picture}
\caption{The neighborhood of $v_1$.}
\label{prism}
\end{center}
\end{figure}

\noindent For all  $v\in V(H)$, $H\setminus \{v\}$ is connected, and every vertex of $N$ is adjacent to at least one vertex of $H\setminus \{v\}$. If there exists $v\in V(H)$ such that the set of neighbors of $v$ in $N$ does not induce a clique in $G$, then contracting an edge connecting $v$ to one of its neighbors in $N$ and then contracting all edges of $H\setminus \{v\}$ gives a minor that contains a subgraph isomorphic to the graph in Figure \ref{prism-coned}.  This graph has a $K_6$ minor obtained by contracting the edges $ae$ and $v_1c$.

\begin{figure}[htpb!]
\begin{center}
\begin{picture}(150, 110)
\put(0,0){\includegraphics[width=2in]{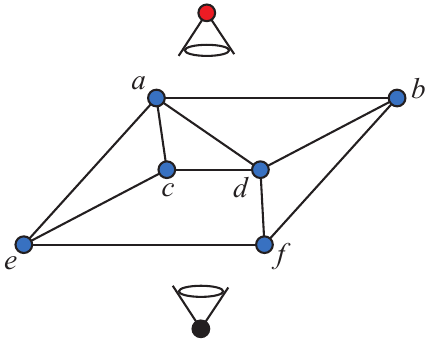}}
\put(64,-5){$v_1$}
\put(67,121){$v_8$}
\end{picture}
\caption{Minor of $G$. The vertices $v_1$ and $v_8$ are adjacent to $a, b, c, d, e,$ and $f$.}
\label{prism-coned}
\end{center}
\end{figure} 

\noindent Assume that for all vertices $v\in V(H)$, the set of neighbors of $v$ in $N$ induces a clique in $G$. In particular, $v_{13}$ is adjacent  to three vertices of $N$ and  $deg_H(v_{13})=5$. 
Without loss of generality, assume that $v_{13}$ is adjacent to $v_2$, $v_4$, and $v_6$, 
%and \textcolor{red}{$deg_G(v_2)=deg_G(v_4)=deg_G(v_6)=6$ why? can we have a degree 7 vertex? I don't think this reference to degrees is needed}, 
as labeled in Figure \ref{prism}.  As $deg_H(v_{13})=5$ and $|E(H)|=12$, by \cite{atlas}, it follows that $H$ is isomorphic to one of the four graphs in Figure \ref{6.12.5}.

\begin{figure}[htpb!]
\begin{center}
\begin{picture}(480, 110)
\put(0,0){\includegraphics[width=6.5in]{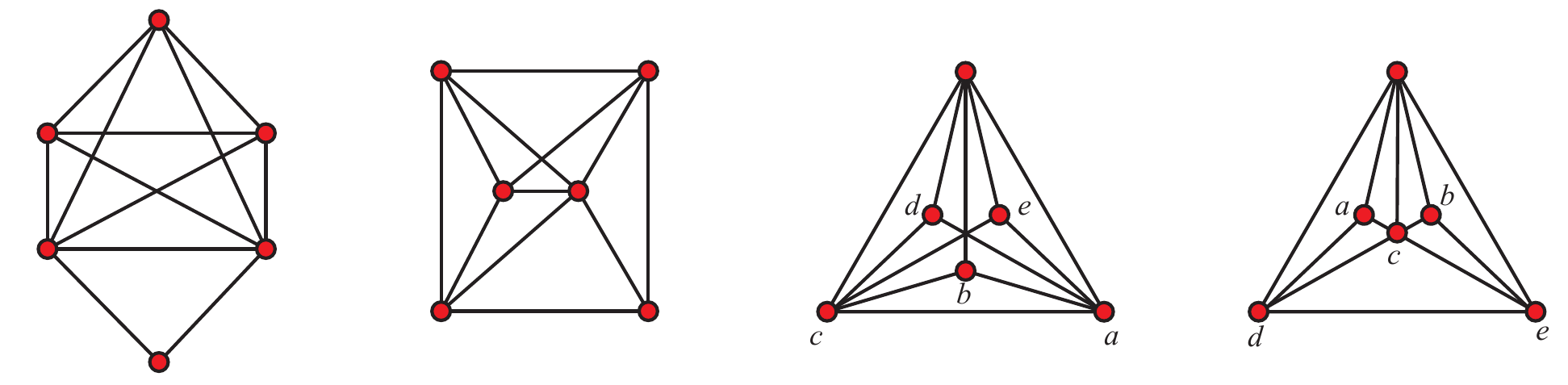}}
\put(278,104){$v_{13}$}
\put(408,104){$v_{13}$}
\end{picture}
\caption{Graphs of order 6 and size 12 that have a vertex of degree 5.}
\label{6.12.5}
\end{center}
\end{figure} 

\noindent The first two graphs in Figure \ref{6.12.5} have $K_5$ minors. If $H$ is isomorphic to either of them, contracting the edges of $N_G[v_1]$ gives a minor of $G$ that has a $K_6$-minor. 
If $H$ is isomorphic to the third graph in Figure \ref{6.12.5}, consider the neighbors in $N$ of the vertices labeled $b, d,$ and $e$. The neighbors in $N$ of each of these vertices must induce a clique of order 3. Since $N$ contains only two induced cliques of order 3, we may assume that $b$ and $d$ are adjacent to the same clique. If $b$ and $d$ are adjacent to $v_2, v_4,$ and $v_6$, then contracting the edge $ab$ yields a minor $G'$ of $G$ whose induced subgraph $\big< v_2, v_4, v_6, b, d, v_{13} \big>_{G'}$ is isomorphic to $K_6$. If $b$ and $d$ are adjacent to $v_3, v_5,$ and $v_7$, then contracting the edge $ab$ together with the edges of $\big< v_2, v_4, v_6, v_{13} \big>_G$ yields a minor $G'$ of $G$ whose induced subgraph $\big< v_3, v_5, v_7, b, d, v_{13} \big>_{G'}$ is isomorphic to $K_6$.
If $H$ is isomorphic to the fourth graph in Figure \ref{6.12.5}, consider the two vertices of degree 3 in $H$, labeled $a$ and $b$. They each must be adjacent to the three vertices of a clique in $N$. If one of $a$ or $b$ (say $b$) is adjacent to $v_2$, $v_4$, and $v_6$, contracting the edges in the graph $\big <v_1, v_3, v_5, v_7, a, c, d, e\big >_G$ gives a $K_6$-minor.
 If both $a$ and be $b$ are adjacent to $v_3$, $v_5$, and $v_7$, contracting the edge $ac$ and the edges of $\big <v_1, v_2, v_4, v_6, d, e, v_{13}\big >_G$ gives a $K_6$-minor. \\
 
\noindent \textit{Assume the maximum degree of $G$ is $7$.} The degree sequence of $G$ is ($6,6,6,6,6,6,6,6,6,$ $7,7,7,7$). Assuming that none of the degree 7 vertices are adjacent (we can delete one such edge and obtain a graph with 40 edges), there are exactly 28 edges connecting vertices of degree 6 to vertices of degree 7. By the Pigeonhole principle, there is a vertex of degree 6, say $v_1$, which is adjacent to all the vertices of degree 7, say $a$, $b$, $c$, and $d$. Let $N_G(v_1)=\big <v_2, v_3, a, b, c, d \big >_G$. The adjacent vertices $v_1$ and $a$ have at most two common neighbors, and $\deg(v_1)+\deg(a)=13<15.$ Following the observations at the beginning of the proof, $G$ contains a $K_6$-minor.\\

\noindent {\bf Case 3} Assume $39\le |E(G)|\le 40$. Since $\delta(G)\ge 6,$  $G$ is connected.  If $v\in V(G)$ such that $G\setminus\{v\}$ is disconnected, then each connected component of $G\setminus\{v\}$ has six vertices. Since  $\delta(G)\ge 6,$ and $\big <S,v\big >_G\simeq K_7$ for each connected component $S$ of $G\setminus\{v\}$, and thus $G$ has a $K_6$ minor. We can assume $G$ is 2-connected. 
%Stricter restrictions on the vertex connectivity of such a graph can be derived while trying to avoid a $K_6$ minors. 
There are exactly 367,860 six-regular, 2-connected graphs of order 13 and exactly 23,489,426 2-connected graphs of order 13, size 40, and minimum degree 6. These were generated using the nauty program \cite{MP}. Using parallel Python computations employing the code in the \textit{minor miner} tool described in \cite{CMR}, we found all these graphs have a $K_6$ minor. It would still be interesting finding a computer-free argument for sizes 39 and 40.

\end{proof}
\noindent We conclude with two open questions.\\
\noindent \textbf{Question 1.} What is the complete set of maxnil graphs of order 13?

\noindent \textbf{Question 2.} What is the complete set of MMNA graphs?

\bibliographystyle{amsplain}

\end{document}